\documentclass[a4paper,10pt]{article}

\usepackage{bm}
\usepackage{amsmath}
\usepackage{amssymb}
\usepackage{amsthm}
\usepackage{mathrsfs}
\usepackage{color}
\usepackage[dvips]{graphicx}
\usepackage[all]{xy}
\usepackage{array, booktabs}

\theoremstyle{plain}
\newtheorem{them}{Theorem}[section]
\newtheorem{lemma}[them]{Lemma}
\newtheorem{prop}[them]{Proposition}

\theoremstyle{definition}
\newtheorem{defi}[them]{Definition}
\newtheorem{exam}[them]{Example}
\newtheorem{rema}[them]{Remark}

\newtheorem{conj}[them]{Conjecture}

\newcommand{\Li}{\mathop{\mathrm{Li}}}

\begin{document}

\title{Zeros of the zeta series of a poset and iterated barycentric subdivision}
\author{Kazunori Noguchi \thanks{noguchi@cc.kogakuin.ac.jp}}
\date{}
\maketitle
\begin{abstract}
We study the limiting behavior of the zeros of the zeta series of a finite poset under iterated barycentric subdivision, and we indicate the possibility of its application to number theory.
\end{abstract}

\footnote[0]{Key words and phrases. zeta series of finite posets, barycentric subdivision, distribution of primes. \\ 2010 Mathematics Subject Classification : 11N56. }

\thispagestyle{empty}

\section{Introduction}

Zeros of a holomorphic function is very important for the function itself and its applications. If the function is  a polynomial $f(z)$, we use solutions of  the equation $f(z)=0$ in all area of mathematics. If the function is the Riemann zeta function $\zeta(s)$, the location of its complex zeros are very important. The Riemann hypothesis states that such zeros lie on the line $\Re s=\frac{1}{2}$, and the conjecture implies a very precise result of the distribution of primes. 

In this paper, we study the limiting behavior of the zeros of the zeta series of a finite poset under iterated barycentric subdivision, and we give a plan, not accomplished, for application of our main result to the distribution of primes.

Let $C$ be a finite category. Define the \textit{zeta series} $Z_C(s)$ of $C$ by $$ Z_C(s):= \sum^{\infty}_{i=0} \# N_i(C) s^i,$$
where $N_i(C)$ is the set of chains of morphisms of length $i$ and $s$ is a complex number. We are allowed to use identity morphisms in the chains. The function $Z_C(s)$ is almost the logarithmic derivative of the zeta function $\zeta_C(s)$ of $C$ \cite{NogB}; that is, $$Z_C(s)= \# N_0(C)+s \frac{\zeta_C'(s)}{\zeta_C(s)},$$ where $$\zeta_C(s)=\exp\left( \sum_{i=1}^{\infty} \frac{\# N_i(C) }{i} s^i \right).$$ The remarkable property of $Z_C(s)$ is to recover the Euler characteristic of $C$, in the sense of \cite{BL08}, by residues (Corollary 3.6 of \cite{NogB}): $$\chi(C)=\mathrm{Res} (Z_C(s) : \infty),$$
if $\chi(C)$ exists. 

Barycentric subdivision is a familiar notion in topology, and it is also defined for posets. It is an operation, denoted by $\mathrm{Sd}$, to produce a new poset from a poset. See the next section for more detail.

The following is  our main theorem.

\begin{them}\label{main}
Suppose that $P$ is a finite poset of dimension $ d  \ge 1$ and its Euler characteristic is nonzero. Let $\beta_1^{(k)},\dots,\beta_d^{(k)}$ be the zeros of $Z_{ \mathrm{Sd}^k(P)  } (s)$ and we assume $| \beta_1^{(k)} |  =\max\{    | \beta_1^{(k)} | ,  \dots, | \beta_d^{(k)} |    \}$. Then, as $k\to \infty$, $| \beta_1^{(k)} | $ diverges to $\infty$ and the others $\beta_i^{(k)}$ converge. In particular, $\beta_1^{(k)}$ is real for sufficiently large $k$ and the product $\prod^d_{i=2} \beta_i^{(k)}$ converges to $(-1)^{d-1}$.
\end{them}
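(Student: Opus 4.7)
The plan is to translate the question about the zeros of $Z_{\mathrm{Sd}^k(P)}(s)$ into one about zeros of an explicit sequence of degree-$d$ polynomials governed by a linear recursion on $f$-vectors, and then analyze the asymptotic behavior via eigenvalue analysis of this recursion. Grouping the chains in $Z_P$ by their non-degenerate skeleton and using $\sum_{i \ge j}\binom{i}{j} s^i = s^j/(1-s)^{j+1}$, one has
$$ Z_P(s) = \sum_{j=0}^d f_j \frac{s^j}{(1-s)^{j+1}} = (1+t)\, F_P(t), \qquad F_P(t) := \sum_{j=0}^d f_j t^j,$$
where $f_j$ counts the non-degenerate $j$-chains of $P$ and $t = s/(1-s)$. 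The hypothesis $\dim P = d$ gives $f_d > 0$, so $F_P$ has degree exactly $d$, and the finite zeros $\beta_i$ of $Z_P$ correspond bijectively to the zeros $\alpha_i$ of $F_P$ via $\beta = \alpha/(1+\alpha)$.

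Next, subdivision acts linearly on $f$-vectors by $f^{(k+1)} = M f^{(k)}$, and counting flags of chains inside a single top simplex yields $M_{ij} = (i+1)!\, S(j+1, i+1)$ with $S$ the Stirling numbers of the second kind. This matrix is upper triangular with pairwise distinct diagonal entries $1!, 2!, \ldots, (d+1)!$, so its top eigenvalue $(d+1)!$ is simple. Let $v = (v_0, \ldots, v_d)$ be the corresponding right eigenvector and set $V(t) := \sum_j v_j t^j$. Two left eigenvectors are visible in $M$: the alternating row $(1, -1, \ldots, (-1)^d)$ for eigenvalue $1$ (witnessing invariance of $\chi$) and $e_d^\top$ for eigenvalue $(d+1)!$. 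Biorthogonality forces $V(-1) = 0$, and $f_d > 0$ guarantees that the top-eigencomponent of $f_P$ is nonzero, whence $F_{\mathrm{Sd}^k(P)}(t)/(d+1)!^k \to c\, V(t)$ uniformly on compact subsets of $\mathbb{C}$ for some constant $c \neq 0$.

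Write $V(t) = (t+1) W(t)$; granting that $-1$ is a simple root of $V$ (which will be a byproduct of the identity proved below), Hurwitz's theorem yields that exactly one zero $\alpha_1^{(k)}$ of $F_{\mathrm{Sd}^k(P)}$ converges to $-1$ and the remaining $d-1$ converge to the zeros of $W$. Translating via $\beta = \alpha/(1+\alpha)$ then gives $\beta_1^{(k)} \to \infty$ and $\beta_i^{(k)}$ converging to a finite limit for every $i \ge 2$, so $|\beta_1^{(k)}|$ is eventually the maximum. Moreover, since the root approaching the real point $-1$ is isolated from the others and $F_{\mathrm{Sd}^k(P)}$ has real coefficients, $\alpha_1^{(k)}$ cannot form part of a complex-conjugate pair and must be real for large $k$; consequently so is $\beta_1^{(k)}$.

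For the product, consider the degree-$d$ polynomial $A^{(k)}(s) := \sum_j f_j^{(k)} s^j (1-s)^{d-j}$ whose roots are $\beta_1^{(k)}, \dots, \beta_d^{(k)}$ and whose leading coefficient is the $k$-independent $(-1)^d \chi(P)$. Rescaling, $A^{(k)}(s)/(c(d+1)!^k)$ tends to $\tilde A(s) := \sum_j v_j s^j (1-s)^{d-j}$, whose coefficient of $s^d$ equals $(-1)^d V(-1) = 0$, so $\tilde A$ has degree $d-1$ with leading coefficient $(-1)^{d-1} V'(-1)$ and constant term $v_0$. The $d-1$ finite-limit roots thus satisfy, by Vieta,
$$\prod_{i=2}^d \beta_i^{(k)} \;\longrightarrow\; \frac{(-1)^{d-1} v_0}{(-1)^{d-1} V'(-1)} = \frac{v_0}{V'(-1)}.$$
The main obstacle, in my view, is to establish the identity $V'(-1) = (-1)^{d-1} v_0$ for the top eigenvector of $M$ (which incidentally also ensures that $-1$ is a simple root). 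I would attempt this by an induction on $d$ exploiting the block embedding $M^{(d-1)} \hookrightarrow M^{(d)}$ and the resulting recursion expressing $v^{(d)}$ in terms of $v_d$ and the last column of $M^{(d)}$; alternatively, one can use the closed form
$$\sum_{i \ge 0} \Big(\sum_j M_{ji} t^j\Big)\frac{x^i}{i!} = \frac{e^x}{(1-(e^x-1)t)^2},$$
which characterizes $V$ as an eigenfunction of an explicit operator and should enable a clean evaluation at $t = -1$. Once this identity is secured, the partial product converges to $(-1)^{d-1}$, completing the proof.
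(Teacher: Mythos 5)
Your framework coincides with the paper's: both reduce the zeros of $Z_{P^{(k)}}$ to those of the degree-$d$ polynomial $\sum_i \#\overline{N_i}(P^{(k)})\,s^i(1-s)^{d-i}$, both exploit the linear action of subdivision on $f$-vectors via the upper-triangular matrix with distinct diagonal $1!,\dots,(d+1)!$ (your $M$ is the paper's $\bm{F}_d'$, and your $V$ is, up to reversal and normalization, the paper's $F$-polynomial), and both finish by isolating roots of the rescaled limit polynomial via Rouch\'e/Hurwitz. Your biorthogonality argument for $V(-1)=0$ (pairing the top right eigenvector against the alternating left eigenvector for eigenvalue $1$) is a genuinely slicker route to the vanishing of the leading coefficient of the limit polynomial than the paper's, which extracts it from the full palindromic symmetry.

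However, there is a genuine gap, and you have correctly located it yourself: the identity $V'(-1)=(-1)^{d-1}v_0$ is asserted but not proved. In the paper's notation this is exactly the statement $H_{1,d}=H_{d,d}$, one instance of the self-reciprocity of the $H$-polynomial (Proposition 2.10), and it is the combinatorial heart of the whole argument. The paper devotes Section 2.3 to it: an explicit similarity $\bm{T}_d\bm{F}_d\bm{T}_d^{-1}=\bm{H}_d$ conjugating $\bm{F}_d$, via the Taylor-expansion matrix at $s=-1$, into the Brenti--Welker descent matrix $\bm{H}_d=(A(d+2,i+1,j+2))$; the rotational symmetry of $\bm{H}_d$; and the observation that an eigenvector for a simple eigenvalue of a rotationally symmetric matrix is (anti)symmetric, with the sign pinned down by $\sum_i H_{i,d}=F_d(0)=1\neq 0$. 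Neither of your two proposed attacks (induction via the block embedding, or the exponential generating function $e^x/(1-(e^x-1)t)^2$) is carried out, and there is no evidence either yields the identity ``cleanly''; the known proof passes through descent statistics of permutations. Note also that the gap is not confined to the product statement: without $V'(-1)\neq 0$ you do not know that $-1$ is a \emph{simple} root of $V$, hence you cannot conclude that exactly one root diverges and $d-1$ converge, so the first and second assertions of the theorem are also left open by your argument as written.
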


The Euler characteristic of a finite poset $P$ is given by $$\chi(P) = \sum^{\dim P}_{i=0} (-1)^i \# \overline{N_i} (P),$$ where $\overline{N_i} (P)$ is the set of chains of nonidentity morphisms in $P$ of length $i$, and the dimension of $P$ is the greatest integer $i$ such that $\overline{N_i} (P)$ is not empty.

Note that this result is very similar to Theorem 3 of \cite{BW08} and Theorem A of \cite{DPS12}, but the convergence of the product is stronger than theirs.

In the last section, we introduce a plan for application of the main theorem to the distribution of primes. It gives a continuation of Bj\"orner's topological approch to the difficult problem \cite{Bjo11}.

\section{Main theorem}

Throughout this section, $P$ is a finite poset.

\subsection{Preliminaries}

We first show the rationality of the zeta series of a finite category $C$. Suppose that the set of objects of $C$ is $\{ x_1,\dots, x_k\}$. Define the \textit{adjacency matrix} $A_C$ to be the matrix whose $(i,j)$-entry is the number of morphisms from $x_i$ to $x_j$.

\begin{lemma}\label{rational} For a finite category $C$, the zeta series of $C$ is rational; that is, $$ Z_C(s)=\frac{  \mathrm{sum}\{  \mathrm{adj} (I-A_C s) \}  }{ \det (I-A_C  s)    } ,$$
where sum means to take the sum of all the entries of a matrix.
\begin{proof}
We have $\# N_i(C) =\mathrm{sum}(A_C^i)$ for any $i\ge 0$. Hence, the result follows from Lemma 2.1 of \cite{BL08}.
\end{proof}

\end{lemma}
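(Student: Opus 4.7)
The plan is to reduce the statement to a combinatorial identification between chain counts and powers of the adjacency matrix, together with the standard rational expression for the resolvent $(I-A_Cs)^{-1}$. The key observation is that a chain of length $i$ in $C$ is a sequence of morphisms $x_{j_0} \to x_{j_1} \to \cdots \to x_{j_i}$ (identity morphisms allowed), and the number of such chains with prescribed endpoints $(x_{j_0},x_{j_i})$ is, by definition of the adjacency matrix and induction on $i$, the $(j_0,j_i)$-entry of $A_C^i$. Summing over all choices of endpoints gives exactly $\mathrm{sum}(A_C^i)$.

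With that established, I would substitute into the defining series to obtain
$$Z_C(s) = \sum_{i=0}^{\infty} \mathrm{sum}(A_C^i)\, s^i = \mathrm{sum}\!\left( \sum_{i=0}^{\infty} (A_C s)^i \right),$$
where the outer operation $\mathrm{sum}$ is linear and therefore commutes with the (formal) series. For $|s|$ small enough that all eigenvalues of $A_C s$ lie inside the open unit disk, the matrix geometric series converges to $(I-A_C s)^{-1}$, so $Z_C(s) = \mathrm{sum}\{ (I-A_C s)^{-1}\}$ as an analytic identity on a punctured neighborhood of the origin.

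To finish, I would invoke the classical adjugate formula $(I-A_C s)^{-1} = \mathrm{adj}(I-A_C s)/\det(I-A_C s)$, which is valid wherever $\det(I-A_C s)\neq 0$, and apply $\mathrm{sum}$ to the numerator matrix (the denominator is a scalar). This yields the claimed expression
$$Z_C(s) = \frac{ \mathrm{sum}\{ \mathrm{adj}(I-A_C s)\} }{\det(I-A_C s)},$$
and by uniqueness of analytic continuation the identity extends to any domain on which the right-hand side is defined. This is essentially the content invoked via Lemma 2.1 of \cite{BL08}, so the only work specific to this lemma is the combinatorial step $\#N_i(C) = \mathrm{sum}(A_C^i)$.

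The main obstacle, if one wants to be careful, is the interchange of $\mathrm{sum}$ with the infinite series and the identification of the formal inverse with the analytic inverse. Both are routine, since $\mathrm{sum}$ is a finite linear functional on matrices and the Neumann series for $(I-A_C s)^{-1}$ converges in an explicit disk around $s=0$; alternatively one can argue entirely on the level of formal power series, in which case the identity $Z_C(s) \cdot \det(I-A_C s) = \mathrm{sum}\{\mathrm{adj}(I-A_C s)\}$ can be verified coefficient by coefficient and then promoted to an equality of rational functions.
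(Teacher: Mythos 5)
Your proof is correct and follows essentially the same route as the paper: the only step specific to this lemma is the identification $\# N_i(C) = \mathrm{sum}(A_C^i)$, after which the rational expression $\sum_i \mathrm{sum}(A_C^i)s^i = \mathrm{sum}\{\mathrm{adj}(I-A_Cs)\}/\det(I-A_Cs)$ is exactly the content of Lemma 2.1 of \cite{BL08}, which the paper cites and which you simply reprove via the Neumann series and the adjugate formula. No gap; you have just inlined the cited lemma instead of quoting it.
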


\begin{lemma}\label{zero}

A complex number $s_0$ is a zero of $Z_P(s)$ if and only if $s_0$ is that of $$ \sum^{\dim P}_{i=0} \#\overline{N_i}(P) s^i (1-s)^{\dim P -i}.$$
\begin{proof}
Since $$ Z_P(s)=\# N_0(P) + s \frac{\zeta_P'(s)}{\zeta_P(s)},$$
by Corollary 2.12 of \cite{NogB}, we have
$$ Z_P(s) = \sum^{\dim P}_{i=0} \frac{ \#\overline{N_i} (P) s^i }{ (1-s)^{i+1} } =\frac{  \sum^{\dim P}_{i=0} \#\overline{N_i} (P) s^i(1-s)^{\dim P -i} }{ (1-s)^{\dim P+1}    }.$$
Hence, $s=1$ is a unique pole of $Z_P(s)$ and the numerator of the right hand side is $\# \overline{N_{\dim P}} (P)$ when $s=1$. Since  $\# \overline{N_{\dim P}} (P)$ is nonzero, the pole does not vanish the zeros of the numerator. Hence, the result follows.
\end{proof}
\end{lemma}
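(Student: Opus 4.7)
The plan is to write $Z_P(s)$ as a single rational function whose numerator is precisely the polynomial in the statement, and then to verify that the pole of $Z_P(s)$ does not absorb any of that numerator's zeros. First I would expand $Z_P(s)$ in terms of chains of nonidentity morphisms. Every chain in $N_i(P)$ arises from a chain of $j\le i$ nonidentity morphisms by inserting $i-j$ identity morphisms at the available $j+1$ slots, giving $\#N_i(P)=\sum_{j}\binom{i}{j}\,\#\overline{N_j}(P)$; combining this with $\sum_{i\ge j}\binom{i}{j}s^i=s^j/(1-s)^{j+1}$ yields
$$Z_P(s)=\sum_{i=0}^{\dim P}\frac{\#\overline{N_i}(P)\,s^i}{(1-s)^{i+1}}.$$
This expansion is essentially Corollary 2.12 of \cite{NogB}, which can be cited directly.

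Next I would put the sum over the common denominator $(1-s)^{\dim P+1}$, producing
$$Z_P(s)=\frac{\sum_{i=0}^{\dim P}\#\overline{N_i}(P)\,s^i(1-s)^{\dim P-i}}{(1-s)^{\dim P+1}}.$$
The denominator vanishes only at $s=1$, so the zeros of $Z_P(s)$ are exactly the zeros of the numerator that are not cancelled by this pole. Substituting $s=1$ into the numerator kills every summand with $i<\dim P$ (through the factor $(1-s)^{\dim P-i}$) and leaves $\#\overline{N_{\dim P}}(P)$, which is nonzero by the very definition of $\dim P$ as the largest index with $\overline{N_i}(P)\ne\emptyset$. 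Hence $s=1$ is not a zero of the numerator, no cancellation occurs, and the two zero sets coincide.

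The main obstacle is the first step, the partial-fraction expansion of $Z_P(s)$ in terms of nonidentity chains; once that is in hand, the remainder is just a common-denominator rearrangement together with a single evaluation at $s=1$. Since \cite{NogB} already supplies the expansion, the argument reduces to these two routine final manipulations.
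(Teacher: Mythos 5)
Your proposal is correct and follows essentially the same route as the paper: it invokes the expansion $Z_P(s)=\sum_{i=0}^{\dim P}\#\overline{N_i}(P)s^i/(1-s)^{i+1}$ (Corollary 2.12 of \cite{NogB}), rewrites it over the common denominator $(1-s)^{\dim P+1}$, and checks that the numerator equals $\#\overline{N_{\dim P}}(P)\neq 0$ at $s=1$, so the pole cancels none of the numerator's zeros. The only addition is your sketched combinatorial derivation of the cited expansion, which is a fine (and correct) supplement but not a different method.
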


We write $$  g_P(s):=  \sum^{\dim P}_{i=0} \#\overline{N_i} (P) s^i(1-s)^{\dim P -i} . $$

Next, we define the \textit{barycentric subdivision} Sd$(P)$ of $P$ as follows: the set of objects of Sd$(P)$ is the coproduct $\coprod_{i\ge 0} \overline{N_i} (P)$, and its order is given by inclusion. Here, we regard a chain $x_0\to x_1\to\cdots\to x_i $ of  $\overline{N_i} (P)$ as the totally ordered $(i+1)$-subset $\{ x_0,x_1,\dots,x_i \}$ of $P.$

For example, if $P=x\longrightarrow y$, then Sd$(P)$ is $$ \{x\} \longrightarrow\{x,y\}\longleftarrow \{ y\}.$$

In fact, we can define the barycentric subdivision for small categories, but the restriction is adequate for our purpose. See \cite{Nog11}.

Barycentric subdivision preserves Euler characteristic: $$\chi(P) = \chi( \mathrm{Sd} (P))$$
by Proposition 3.11 of \cite{NogA}. We denote the $k$-times subdivided poset by $P^{(k)}$:
$$ \mathrm{Sd}^{k} (P)=P^{(k)}.$$

\subsection{Various numbers and polynomials}\label{vari}

We introduce various numbers and polynomials. They are important to study combinatorial properties of barycentric subdivision.

For $i,d \ge 0$ and $\bm{f}$ of $\overline{N_d}(P)$, define $f_{i,d}$ to be the number of chains of nonidentity morphisms in $P^{(1)}$ of length $i$ whose target is $\bm{f}$. This definition does not depend on the choice of $\bm{f}$, only does on the length of $\bm{f}$. Define $f_{-1,-1}=1$ and $f_{-1, d} =0.$ It is easy to show that
\begin{align}
 f_{i,d} &= \sum^d_{j=i} \binom{d+1}{j}  f_{i-1,j-1} \label{ind1}
 \end{align}
for any $i,d\ge 0$; therefore, we can compute the numbers inductively.
\begin{center}
\begin{tabular}{lrrrrrrrr} \toprule
$f_{i,d}$&$d=0$&$d=1$&$d=2$&$d=3$&$d=4$&$d=5$&$d=6$&$d=7$ \\ \midrule
$i=0$ &1&1&1&1&1&1&1&1 \\
$i=1$&0&2&6&14&30&62&126&254 \\
$i=2$&0&0&6&36&150&540&1806&5796 \\
$i=3$&0&0&0&24&240&1560&8400&40824 \\
$i=4$&0&0&0&0&120&1800&16800&126000 \\
$i=5$&0&0&0&0&0&720&15120&191520 \\
$i=6$& 0&0&0&0&0&0&5040&141120 \\
$i=7$ & 0&0&0&0&0&0&0&40320 \\ \bottomrule
\end{tabular}
\end{center}
In particular, we have $f_{d,d} =(d+1)!$, $f_{d-1,d} =\frac{d}{2} (d+1)!$, and $f_{1,d} =2(2^d -1)$.

We define the next number. For $0\le i<d$, define a rational number $F_{i,d}$ by

\begin{multline}F_{i,d} = \sum_{  i_0=i<i_1<\cdots <i_{\ell}<d, \ell \ge 0 }   \frac{  f_{i_0,i_1} }{ (d+1)!-(i_0 +1)!  } \frac{  f_{i_1,i_2} }{ (d+1)!-(i_1 +1)!  } \cdots \\ \cdots\frac{  f_{i_{\ell}, d} }{ (d+1)!-(i_{\ell} +1)!  }  .\end{multline}
Put $F_{d,d}=1$ and $F_{-1,d}=0$ if $d\ge 0$. At first glance, the numbers seem to be complicated, so some readers might wonder what the numbers are. However, we will see that the column vector ${}^t (F_{-1,d},  F_{0,d} ,\dots,F_{d,d})$ is an eigenvector of a certain matrix (Lemma \ref{sym1}). We can inductively compute $F_{i,d}$ by the following: 
 \begin{align}F_{i,d} &=\frac{1}{  (d+1)!-(i+1)!  }  \sum^d_{j=i+1}  f_{i,j} F_{j,d} \label{ind2}\end{align}
 for $-1\le i\le d-1$.  

\begin{center}
\begin{tabular}{ccccccccc} \toprule
$F_{i,d}$ & $d=0$& $d=1$& $d=2$& $d=3$& $d=4$& $d=5$& $d=6$& $d=7$ \\ \midrule  \addlinespace[3pt]
$i=0$&1&1&$\frac{1}{2}$ & $\frac{2}{11}$&$\frac{1}{19}$& $\frac{132}{10411} $& $\frac{90}{34399}$ & $\frac{15984}{33846961}$\\  \addlinespace[3pt]
$i=1$&&1&$\frac{3}{2} $&$\frac{13}{11}$ & $\frac{25}{38}$&$\frac{3004}{10411}$& $\frac{3626}{34399}$& $\frac{12351860}{372316571}$\\  \addlinespace[3pt]
$i=2$&&&1&2&$\frac{40}{19}$& $\frac{45}{29}$& $\frac{61607}{68798}$&$\frac{7924}{18469}$ \\  \addlinespace[3pt]
$i=3$&&&&1&$\frac{5}{2}$&$\frac{95}{29}$&$\frac{245}{82}$& $\frac{39221}{18469}$ \\  \addlinespace[3pt]
$i=4$&&&&&1&3&$\frac{385}{82}$& $\frac{56}{11}$ \\  \addlinespace[3pt]
$i=5$&&&&&&1&$\frac{7}{2}$& $\frac{70}{11}$ \\  \addlinespace[3pt]
$i=6$&&&&&&&1&4 \\  \addlinespace[3pt]
$i=7$&&&&&&&&1 \\ \bottomrule
\end{tabular}
\end{center}
We can find the same table, but bigger than ours, in \S 6  of \cite{DPS12}.

 Define the \textit{$F$-polynomial} $F_d(s)$ of degree $d\ge 0$ by $$ F_d(s) :=\sum^d_{i=-1} F_{i,d} s^{d-i}.$$

Finally, we define the most important number and polynomial in this paper. Define a rational number $H_{i,d}$, for $ 0\le i \le d,$ to be the coefficient of $(s+1)^i$ in the Taylor expansion of $F_d$(s) at $s=-1$: $$ F_d(s)=\sum^{d+1}_{i=0}  H_{i,d} (s+1)^i.$$ In other words, we have $$ F_d(s-1)=\sum^{d+1}_{i=0} H_{i,d} s^i.$$

\begin{defi}
For $d \ge0$, define the \textit{$H$-polynomial} $H_d(s)$ of degree $d$ by $$ H_d(s):= \sum^{d+1}_{i=0} H_{i,d} s^{d-i}.$$
\end{defi}

This polynomial is the heart of the proof of the main theorem. We will find that zeros of $H_{\dim P}(s)$ directly influences to those of $Z_{P^{(k)}} (s)$. By the table above, we obtain a new table.
\begin{center}
\begin{tabular}{ccccccccc} \toprule
$H_{i,d}$ & $d=0$& $d=1$& $d=2$& $d=3$& $d=4$& $d=5$& $d=6$& $d=7$ \\ \midrule \addlinespace[3pt]
$i=0$&0&0&0&0&0&0&0&0 \\ \addlinespace[3pt]
$i=1$&0&1&$\frac{1}{2}$&$\frac{2}{11}$&$\frac{1}{19}$&$\frac{132}{10411}$&$\frac{90}{34399}$& $\frac{15984}{33846961}$ \\ \addlinespace[3pt]
$i=2$&&0&$\frac{1}{2}$&$\frac{7}{11}$&$\frac{17}{38}$& $\frac{2344}{10411}$&$\frac{3086}{34399}$&$\frac{11121092}{372316571}$\\ \addlinespace[3pt]
$i=3$&&&0&$\frac{2}{11}$&$\frac{17}{38}$&$\frac{5459}{10411}$&$\frac{28047}{68798}$& $\frac{89321060}{372316571}$ \\ \addlinespace[3pt]
$i=4$&&&&0&$\frac{1}{19}$& $\frac{2344}{10411}$& $\frac{28047}{68798}$& $\frac{171080619}{372316571}$ \\ \addlinespace[3pt]
$i=5$&&&&&0&$\frac{132}{10411}$& $\frac{3086}{34399}$& $\frac{89321060}{372316571}$ \\ \addlinespace[3pt]
$i=6$&&&&&&0&$\frac{90}{34399}$&$\frac{11121092}{372316571}$ \\ \addlinespace[3pt]
$i=7$&&&&&&&0&$\frac{15984}{33846961}$\\\addlinespace[3pt]
$i=8$&&&&&&&&0 \\ \bottomrule

\end{tabular}
\end{center}
We can observe that any column in the table is symmetric; therefore, $H_d(s)$ is self-reciprocal. We prove it, for any $d \ge 0$, in the next section, and the fact plays a crucial role for the proof of our main theorem.

\subsection{Symmetry of the $H$-polynomials}

In this section, we prove that the $H$-polynomials are self-reciprocal. 

For $d\ge 0$, define a matrix $\bm{F}_d$ by $\bm{F}_d =(f_{i,j})_{-1\le i,j \le d}$. For example, we have
$$ \bm{F}_0 =\begin{pmatrix}  1&0 \\0&1\end{pmatrix}   , \bm{F}_1 = \begin{pmatrix}  1&0&0  \\ 0&1  &1 \\0&0&2\end{pmatrix}  , \bm{F}_2 = \begin{pmatrix}  1&0&0 &0 \\ 0&1  &1&1 \\0&0&2 &6\\ 0&0&0&6\end{pmatrix}   .$$

\begin{lemma}\label{sym1}
For $d\ge 0$, the numbers $0!,1!,\dots, (d+1)!$ are the eigenvalues of $\bm{F}_d$, and the column vector $$ {}^t(F_{-1,d}, F_{0,d},\dots,F_{i,d},\dots,F_{d,d})$$ is an eigenvector for $(d+1)!$ of $\bm{F}_d$.
\begin{proof}
Since $\bm{F}_d$ is upper triangular and the numbers $0!,1!,\dots,(d+1)!$ are the diagonal entries, the first claim follows.

For the second claim, we have to show $$ \sum^d_{j=i} f_{i,j} F_{j,d} =(d+1)! F_{i,d}$$
for any $-1\le i \le d$. When $i=d$, it is clear. If $-1\le i\le d-1$, then \eqref{ind2} directly implies the equality. Hence, the result follows. 
\end{proof}
\end{lemma}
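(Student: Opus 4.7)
The plan is to exploit upper triangularity and then directly verify the eigenvector equation using the provided recursion. First I would check that $\bm{F}_d$ is upper triangular: combinatorially $f_{i,j}$ counts chains of nonidentity morphisms in $P^{(1)}$ of length $i$ whose target is a chain of length $j$, which forces $i \le j$, so $f_{i,j} = 0$ whenever $i > j$ (one can also confirm this by induction from the recurrence \eqref{ind1}). The eigenvalues are therefore the diagonal entries $f_{i,i}$. For $i \ge 0$ this is $(i+1)!$, as noted in the paper, and for $i = -1$ it is $f_{-1,-1} = 1 = 0!$ by convention. Hence the spectrum is $\{0!, 1!, \ldots, (d+1)!\}$, giving the first claim.

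For the second claim, I want to verify $\sum_{j=-1}^{d} f_{i,j} F_{j,d} = (d+1)!\, F_{i,d}$ for each $-1 \le i \le d$. Upper triangularity collapses the left-hand side to $\sum_{j=i}^{d} f_{i,j} F_{j,d}$. The case $i = d$ is immediate since $f_{d,d} = (d+1)!$ and $F_{d,d} = 1$. For $-1 \le i < d$, split off the diagonal term to rewrite the sum as $(i+1)!\, F_{i,d} + \sum_{j=i+1}^{d} f_{i,j} F_{j,d}$, and then apply the recursion \eqref{ind2}, which (after clearing the denominator) says exactly $\sum_{j=i+1}^{d} f_{i,j} F_{j,d} = \bigl((d+1)! - (i+1)!\bigr) F_{i,d}$; adding $(i+1)!\, F_{i,d}$ produces $(d+1)!\, F_{i,d}$, as required.

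There is no real obstacle. The whole argument is a reinterpretation of \eqref{ind2} as the Gauss-style back-substitution solving $(\bm{F}_d - (d+1)! I)\bm{v} = 0$; the somewhat elaborate definition of $F_{i,d}$ was rigged precisely to produce this eigenvector. The only place one needs to be a bit careful is the bookkeeping for the row $i = -1$, where $(i+1)! = 0! = 1$ agrees with $f_{-1,-1} = 1$, so the same formula covers the extreme row without a separate case.
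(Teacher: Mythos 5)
Your proof is correct and follows essentially the same route as the paper: upper triangularity gives the eigenvalues, and the eigenvector equation for $(d+1)!$ is exactly the recursion \eqref{ind2} after accounting for the diagonal term $f_{i,i}F_{i,d}=(i+1)!\,F_{i,d}$ (a step the paper leaves implicit but you correctly spell out, including the $i=-1$ boundary case).
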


For $d\ge 1$ and a permutation $\sigma$ on the set $[d]=\{ 1,2,\dots,d \}$, define des$(\sigma)$ to be the number of $1\le i\le d-1$ such that $\sigma(i)>\sigma(i+1)$. For $1\le j\le d$ and $0\le i \le d-1$, we denote by $A(d,i,j)$ the number of permutations $\sigma$ on $[d]$ such that $\sigma(1)=j$ and des$(\sigma)=i$. In particular, $A(d,i,j)=0$ if $i\le -1$.

For $d\ge 0$, define a matrix $\bm{H}_d$ by $$ \bm{H}_d =\left( h^{(d)}_{i,j} \right)_{-1\ge i,j \ge d}= \big( A(d+2,i+1,j+2) \big)_{-1\le i,j \le d}.$$For example, we have $$ \bm{H}_0 =\begin{pmatrix}  1&0 \\0&1\end{pmatrix}   , \bm{H}_1 = \begin{pmatrix}  1&0&0  \\ 1&2  &1 \\0&0&1 \end{pmatrix}  , \bm{H}_2 = \begin{pmatrix}  1&0&0 &0 \\ 4&4 &2&1 \\ 1&2&4 &4 \\ 0&0&0&1 \end{pmatrix} , $$
For $d\ge 0$, define a matrix $\bm{H}_d$ by $$ \bm{H}_d =\left( h^{(d)}_{i,j} \right)_{-1\ge i,j \ge d}= \big( A(d+2,i+1,j+2) \big)_{-1\le i,j \le d}.$$For example, we have $$ \bm{H}_0 =\begin{pmatrix}  1&0 \\0&1\end{pmatrix}   , \bm{H}_1 = \begin{pmatrix}  1&0&0  \\ 1&2  &1 \\0&0&1 \end{pmatrix}  , \bm{H}_2 = \begin{pmatrix}  1&0&0 &0 \\ 4&4 &2&1 \\ 1&2&4 &4 \\ 0&0&0&1 \end{pmatrix} , $$
$$  \bm{H}_3 = \begin{pmatrix}  1&0&0&0&0 \\ 11&8&4&2&1 \\ 11&14&16&14&11 \\1&2&4&8&11 \\0&0&0&0&1 \end{pmatrix}  , \bm{H}_4=\begin{pmatrix} 1&0&0&0&0&0 \\26&16&8&4&2&1 \\66&66&60&48&36&26 \\26&36&48&60&66&66 \\ 1&2&4&8&16&26 \\0&0&0&0&0&1 \end{pmatrix} .$$ 
It is easy to compute these examples by the following lemma:
\begin{lemma}[Lemma 2(i) of \cite{BW08}]\label{BW}
For $d \ge 0$ and $-1 \le i,j \le d,$ we have $$ h^{(d)}_{i,j}  = \sum^{j-1}_{\ell =-1}   h^{(d-1)}_{i-1,\ell}   + \sum^{d-1}_{\ell =j}   h^{(d-1)}_{i,\ell} .$$
\end{lemma}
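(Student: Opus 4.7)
The plan is to prove this recurrence via the classical bijection that deletes the first entry of a permutation. Set $m = d+2$, $k = i+1$, $a = j+2$, so that $h^{(d)}_{i,j} = A(m,k,a)$ counts permutations $\sigma \in S_m$ with $\sigma(1) = a$ and $\operatorname{des}(\sigma) = k$. Any such $\sigma$ is determined by its second entry $b = \sigma(2)$ together with the permutation $\tau \in S_{m-1}$ obtained by deleting $\sigma(1) = a$ and then subtracting $1$ from every surviving entry that exceeds $a$. I would partition the count according to whether $b < a$ or $b > a$.

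If $b < a$, the pair $(a,b)$ is a descent and $b$ is fixed by the relabeling, so $\tau(1) = b$ and $\operatorname{des}(\tau) = k - 1$. As $b$ ranges over $\{1, \ldots, a-1\}$, this contributes $\sum_{b=1}^{a-1} A(m-1, k-1, b)$; reindexing with $b = \ell+2$ gives the first sum $\sum_{\ell=-1}^{j-1} h^{(d-1)}_{i-1, \ell}$. If $b > a$, the pair $(a,b)$ is an ascent, so $\operatorname{des}(\tau) = k$ and $\tau(1) = b - 1 \in \{a, a+1, \ldots, m-1\}$, which yields $\sum_{b'=a}^{m-1} A(m-1, k, b') = \sum_{\ell=j}^{d-1} h^{(d-1)}_{i,\ell}$. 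Since $\sigma \mapsto (b, \tau)$ has an obvious inverse (insert $a$ at position $1$ and add $1$ back to all values $\ge a$), it is a bijection and the two sums together equal $h^{(d)}_{i,j}$.

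The only real obstacle is the index bookkeeping: one must align three parametrizations ($(d,i,j)$ for $\bm{H}_d$, $(m,k,a)$ for $A$, and the summation variable $\ell$) and verify the boundary cases. For instance, $j = -1$ forces $a = 1$ and makes the first sum empty, matching the fact that no $b < 1$ exists; $j = d$ forces $a = m$ and makes the second sum empty, matching the absence of $b > m$; and the cases $k \in \{0, d+1\}$ interact correctly with the convention that $A(d,i,j) = 0$ for $i \le -1$. These checks are routine, so the combinatorial content is entirely captured by the deletion-and-relabeling bijection above, and this matches the statement of Lemma 2(i) of \cite{BW08}.
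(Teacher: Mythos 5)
Your proof is correct. Note that the paper does not actually prove this lemma---it is imported verbatim as Lemma 2(i) of \cite{BW08}---so your argument supplies a proof where the paper gives only a citation; the first-entry deletion bijection you describe is the standard one (and is essentially the argument in \cite{BW08}). Your index bookkeeping checks out: with $m=d+2$, $k=i+1$, $a=j+2$ the two sums become $\sum_{b=1}^{a-1}A(m-1,k-1,b)$ and $\sum_{b'=a}^{m-1}A(m-1,k,b')$, the case $b=\sigma(2)<a$ creates exactly one extra descent at position $1$ while preserving all others under the relabeling, the case $b>a$ creates none, and the boundary cases ($j=-1$, $j=d$, and $k=0$ via the convention $A(d,i,j)=0$ for $i\le-1$) all degenerate correctly.
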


The two matrices $\bm{F}_d$ and $\bm{H}_d$ have already been used in \cite{BW08}. In the paper, Brenti and Welker found that the matrices are similar (Lemma 4 (i) of \cite{BW08}); that is, there exists a nonsingular matrix $\bm{P}$ such that $\bm{P}^{-1} \bm{F}_d \bm{P} =\bm{H}_d$. In this paper, we explicitly describe the matrices $\bm{P}$ and $\bm{P}^{-1}$.

For $d\ge 0$, define the \textit{Taylor expansion matrix (at $s=-1$)} $\bm{T}_d$ by $$ \bm{T}_d =\left(  (-1)^{d+1+i+j}\binom{d-j}{i+1} \right)_{-1\le i,j \le d} .$$ If $f(s)=\sum^d_{j=-1} a_j s^{d-j}$, then we have $$ \bm{T}_d{}^t(a_{-1},a_0,\dots,a_d) ={}^t \left(   f(-1), f'(-1) ,\dots, \frac{f^{(d+1)} (-1)}{(d+1)!}\right).$$

\begin{lemma}\label{sym2}
For $d\ge 0$, the Taylor expansion matrix is nonsingular, and the inverse matrix is given by $$\bm{T}_d'= \left(  \binom{j+1}{d-i} \right)_{-1\le i,j \le d} .$$
\begin{proof}
The $(i,j)$-entry of $\bm{T}_d \bm{T}_d'$ is $$ \sum_{-1\le k\le d} (-1)^{d+1+i+k}  \binom{d-k}{i+1} \binom{j+1}{d-k},$$
and we show that it is the Kronecker delta. By multiplying by $x^{i+1}$ and summing over $k\ge -1$, we have
\begin{align*}
&\sum^{\infty}_{i=-1} \sum_{-1 \le k\le d} (-1)^{d+1+i+k} \binom{d-k}{i+1} \binom{j+1}{d-k} x^{i+1}  \\
=& \sum^{\infty}_{k=-1}(-1)^{d+k} \binom{j+1}{d-k} \sum^{\infty}_{i=0}(-1)^i \binom{d-k}{i}x^i \\
=&\sum^{\infty}_{k=-1}(-1)^{d-k}\binom{j+1}{d-k} (1-x)^{d-k} \\
=&x^{j+1}.
\end{align*}
Hence, the result follows.
\end{proof}

\end{lemma}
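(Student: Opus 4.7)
The plan is to recognize this lemma as a statement about inverse change-of-basis matrices on the $(d+2)$-dimensional space of polynomials of degree at most $d+1$. The author's remark immediately preceding the lemma says that $\bm{T}_d$ sends the coefficient vector of $f(s)=\sum_j a_j s^{d-j}$ to the tuple of its Taylor coefficients at $s=-1$. Equivalently, $\bm{T}_d$ is the matrix taking the coordinates of $f$ in the monomial basis $\{s^{d-j}\}_{-1\le j\le d}$ to its coordinates in the shifted basis $\{(s+1)^{i+1}\}_{-1\le i\le d}$. This is automatically an isomorphism, so invertibility is free; the genuine content of the lemma is the explicit formula for the inverse.

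To identify that inverse I would invert the change of basis directly. Expanding
$$ (s+1)^{i+1} = \sum_{\ell=0}^{i+1} \binom{i+1}{\ell} s^\ell,$$
and reading off the coefficient of $s^{d-j}$ shows that if $f(s) = \sum_i b_i (s+1)^{i+1}$ then its monomial coefficient $a_j$ equals $\sum_i b_i\binom{i+1}{d-j}$. After relabeling dummy indices this is exactly $(\bm{T}_d')_{i,j} = \binom{j+1}{d-i}$, which is the promised formula.

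An alternative, more computational route, and essentially the one taken in the excerpt, is to verify $\bm{T}_d\bm{T}_d' = I$ directly. That reduces to the binomial identity
$$ \sum_{k=-1}^{d}(-1)^{d+1+i+k}\binom{d-k}{i+1}\binom{j+1}{d-k} = \delta_{i,j}.$$
Substituting $m = d-k$ and applying the trinomial revision $\binom{m}{i+1}\binom{j+1}{m} = \binom{j+1}{i+1}\binom{j-i}{m-i-1}$ factors out $\binom{j+1}{i+1}$ and leaves an alternating sum $\sum_n(-1)^n\binom{j-i}{n} = (1-1)^{j-i}$, which vanishes for $i\ne j$ and equals $1$ when $i=j$. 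The author's generating-function computation is a slick repackaging of exactly this identity. No step presents a real obstacle; the only pitfall is careful bookkeeping of the unusual indexing starting at $-1$ and of the signs produced by evaluating at $s=-1$.
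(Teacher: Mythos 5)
Your proposal is correct, and your primary argument is genuinely different from the paper's. The paper verifies $\bm{T}_d\bm{T}_d'=I$ entry by entry, establishing the binomial identity $\sum_{k}(-1)^{d+1+i+k}\binom{d-k}{i+1}\binom{j+1}{d-k}=\delta_{i,j}$ via a generating-function manipulation in $x$; your second, computational route (trinomial revision plus the alternating sum $\sum_n(-1)^n\binom{j-i}{n}$) proves the same identity and is essentially the same proof in different packaging — note only that for $j<i$ the vanishing comes from the factored-out $\binom{j+1}{i+1}=0$ rather than from $(1-1)^{j-i}$, which is not meaningful there. Your first route is the more conceptual one: interpreting $\bm{T}_d$ as the change of basis from $\{s^{d-j}\}$ to $\{(s+1)^{i+1}\}$ on polynomials of degree at most $d+1$ makes nonsingularity automatic and produces the inverse by simply expanding $(s+1)^{i+1}=\sum_\ell\binom{i+1}{\ell}s^\ell$, with no binomial identity needed; its only dependency is the assertion, stated just before the lemma, that $\bm{T}_d$ really is the Taylor-expansion map, which if one wants full self-containment is a one-line check by expanding $s^{d-j}=\bigl((s+1)-1\bigr)^{d-j}$. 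What the paper's computation buys is independence from that interpretation (it works directly from the matrix entries); what your basis argument buys is an explanation of \emph{why} the inverse has binomial entries and a proof that scales with no index bookkeeping beyond the shift by $-1$.
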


\begin{lemma}\label{sym3}
For $d\ge 0$, we have $\bm{T}_d \bm{F}_d \bm{T}_d^{-1}  =\bm{H}_d$.
\begin{proof}
We prove the claim by induction on $d$. 

When $d=0$, it is clear.

Suppose that the equality holds for $d-1$. The $(i,j)$-entries of both sides are $$ \sum_{-1\le k,k'\le d}  (-1)^{d+1+i+k} \binom{d-k}{i+1} \binom{j+1}{d-k'}  f_{k,k'}$$
and $h^{(d)}_{i,j}$, respectively. By Lemma \ref{BW} and the assumption of induction, we have
\begin{align*}
h^{(d)}_{i,j} =&\sum^{j+1}_{\ell=1} A(d+1,i,\ell) + \sum^{d+2}_{\ell=j+3}A(d+1,i+1,\ell -1)  \\
=&\sum_{-1\le k,k'\le d-1} (-1)^{d+1+i+k} f_{k,k'}\Bigg( \sum^j_{\ell=0} \binom{d-1-k}{i} \binom{\ell}{d-1-k'} \\
& - \sum^d_{\ell=j+1} \binom{d-1-k}{i+1}\binom{\ell}{d-1-k'} \Bigg)\\
=&\sum_{-1\le k,k'\le d-1} (-1)^{d+1+i+k} f_{k,k'}\Bigg( \sum^j_{\ell=d-1-k'} \binom{d-1-k}{i} \binom{\ell}{d-1-k'} \\
&+  \sum^j_{\ell=d-1-k'} \binom{d-1-k}{i+1} \binom{\ell}{d-1-k'} - \sum^d_{\ell=d-1-k'} \binom{d-1-k}{i+1}\binom{\ell}{d-1-k'} \Bigg)\\
=&\sum_{-1\le k,k'\le d-1} (-1)^{d+1+i+k} f_{k,k'}\left(  \binom{d-k}{i+1}\binom{j+1}{d-k'} -\binom{d-1-k}{i+1}\binom{d+1}{k'+1} \right) .
\end{align*}
The last equality follows from the equality $\sum^b_{c=a}\binom{c}{a}=\binom{b+1}{a+1}$ for $a,b \ge 0$.

On the other hand, by \eqref{ind1}, we have
\begin{align*}
&\sum_{-1\le k,k' \le d} (-1)^{d+1+i+k} \binom{d-k}{i+1} \binom{j+1}{d-k'}f_{k,k'}\\
=&\sum_{-1\le k,k' \le d-1} (-1)^{d+1+i+k} \binom{d-k}{i+1} \binom{j+1}{d-k'}f_{k,k'} \\
&+\sum_{0\le k \le d} (-1)^{d+1+i+k} \binom{d-k}{i+1}  f_{k,d}\\
=&\sum_{-1\le k,k' \le d-1} (-1)^{d+1+i+k} \binom{d-k}{i+1} \binom{j+1}{d-k'}f_{k,k'} \\
& +\sum_{0\le k \le d} (-1)^{d+1+i+k} \binom{d-k}{i+1} \sum^d_{k'=k} \binom{d+1}{k'} f_{k-1,k'-1} \\
=&\sum_{-1\le k,k' \le d-1} (-1)^{d+1+i+k} \binom{d-k}{i+1} \binom{j+1}{d-k'}f_{k,k'} \\
&+ \sum_{-1\le k,k' \le d-1} (-1)^{d+i+k} \binom{d-1-k}{i+1} \binom{d+1}{k'+1}f_{k,k'} \\
=&\sum_{-1\le k,k'\le d-1} (-1)^{d+1+i+k} f_{k,k'}\left(  \binom{d-k}{i+1}\binom{j+1}{d-k'} -\binom{d-1-k}{i+1}\binom{d+1}{k'+1} \right) .
\end{align*}
Hence, the result follows.
\end{proof}
\end{lemma}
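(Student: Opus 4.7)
The plan is to prove the identity $\bm{T}_d \bm{F}_d \bm{T}_d^{-1} = \bm{H}_d$ by induction on $d$. The base case $d=0$ is immediate since all three matrices are the $2\times 2$ identity. For the inductive step, I would compare the $(i,j)$-entries of the two sides directly, using the known recurrences: Lemma \ref{BW} for the entries of $\bm{H}_d$ in terms of $\bm{H}_{d-1}$, and equation \eqref{ind1} for the entries of $\bm{F}_d$ in terms of lower-index $f_{k-1,k'-1}$.

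On the $\bm{H}$-side, I would apply Lemma \ref{BW} to unfold $h^{(d)}_{i,j}$ as two partial sums of entries $h^{(d-1)}_{i-1,\ell}$ and $h^{(d-1)}_{i,\ell}$, then invoke the inductive hypothesis to replace each such entry by the corresponding triple sum $\sum (-1)^{d+i+k}\binom{d-1-k}{i+1}\binom{k'+1}{d-1-k'}f_{k,k'}$ coming from $\bm{T}_{d-1} \bm{F}_{d-1} \bm{T}_{d-1}^{-1}$. This rewrites $h^{(d)}_{i,j}$ as a quadruple sum that still contains the dummy index $\ell$ running over a range depending on $j$ and the $k'$ appearing in the binomial $\binom{\ell}{d-1-k'}$.

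On the $\bm{T}\bm{F}\bm{T}^{-1}$ side, I would split the sum over $k'\in[-1,d]$ into the part with $k'\le d-1$ (which already looks like a $(d-1)$-level object) and the part $k'=d$, where the column entries $f_{k,d}$ are expanded via \eqref{ind1} into $\sum_{k'=k}^d \binom{d+1}{k'}f_{k-1,k'-1}$. After reindexing $k\mapsto k+1$, $k'\mapsto k'+1$ this produces a clean sum over $-1\le k,k'\le d-1$ involving the product $\binom{d-1-k}{i+1}\binom{d+1}{k'+1}$. The key combinatorial step, and the mechanism that fuses the two sides, is the hockey-stick identity $\sum_{c=a}^{b}\binom{c}{a} = \binom{b+1}{a+1}$, which collapses the $\ell$-sums on the $\bm{H}$-side into products of two binomials, after which the two expressions coincide term-by-term in $k$ and $k'$.

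The main obstacle will be purely bookkeeping: aligning the index ranges (including the boundary values $k,k'\in\{-1,d\}$), tracking the sign $(-1)^{d+1+i+k}$ through the index shifts, and ensuring that the extra boundary contribution produced when isolating the $k'=d$ column of $\bm{F}_d$ is exactly cancelled by the negative $\binom{d-1-k}{i+1}\binom{d+1}{k'+1}$ term that emerges after applying the hockey-stick identity. Once the indices are synchronized and the identity $\binom{d-k}{i+1} - \binom{d-1-k}{i+1}$ is used where needed, the two sides become identical sums, completing the induction.
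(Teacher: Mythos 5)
Your proposal follows essentially the same route as the paper's proof: induction on $d$, unfolding $h^{(d)}_{i,j}$ via Lemma \ref{BW} and the inductive hypothesis, collapsing the resulting $\ell$-sums with the hockey-stick identity $\sum_{c=a}^{b}\binom{c}{a}=\binom{b+1}{a+1}$, and matching the outcome against the entry of $\bm{T}_d\bm{F}_d\bm{T}_d^{-1}$ after isolating the $k'=d$ column, expanding $f_{k,d}$ by \eqref{ind1}, and reindexing. One small correction: $\bm{T}_0=\begin{pmatrix}-1&1\\1&0\end{pmatrix}$ is not the identity matrix, but your base case still holds trivially since $\bm{F}_0=\bm{H}_0$ is the identity, which is unchanged under conjugation.
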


A square matrix $\bm{A}=(a_{i,j})_{1\le i,j \le n}$ over a ring is \textit{rotationally symmetric} if $a_{i,j}=a_{n+1-i,n+1-j}$ for any $1\le i,j \le n$. An eigenvector for a simple eigenvalue of a rotationally symmetric matrix has the following interesting property; that is, the eigenvector is almost symmetric.

\begin{lemma}\label{sym4}
Let $\bm{A}=(a_{i,j})_{1\le i,j \le n}$ be a rotationally symmetric matrix over $\mathbb{C}$, $\lambda$ be a simple eigenvalue of $\bm{A}$, and $\bm{x}={}^t (x_1, x_2,\dots,x_n)$ is an eigenvector for $\lambda$. Then, we have $x_i= \delta x_{n+1-i}$ for any $1\le i \le n$, where $\delta=\pm 1$.

Moreover, if the sum $\sum_i x_i$ is nonzero, then $\delta =1$.
\begin{proof}
It is easy to show that the vector $\bm{x}'={}^t (x_n,x_{n-1}, \dots,x_1)$ is also an eigenvector for $\lambda$. Since $\lambda$ is simple, the eigenspace for $\lambda$ is a one-dimensional space; therefore, there exists a complex number $\delta$ such that $\delta \bm{x}=\bm{x}'$. Since $\bm{x}$ is an eigenvector, $x_i$ is nonzero for some $i$. Furthermore, since $\delta x_i=x_{n+1-i}$ and $\delta x_{n+1-i} =x_i$, the constant $\delta$ must be $\pm 1$. Hence, the first claim follows.

Moreover, the equality $\delta \bm{x}=\bm{x}'$ implies $\delta\sum_i x_i=\sum_i x_i$. Since $\sum_i x_i$ is nonzero, the second claim follows.
\end{proof}
\end{lemma}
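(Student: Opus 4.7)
The plan is to encode the rotational symmetry of $\bm{A}$ via the reversal permutation matrix $\bm{J}=(J_{i,j})$ defined by $J_{i,j}=1$ if $j=n+1-i$ and $0$ otherwise. A direct calculation shows that the condition $a_{i,j}=a_{n+1-i,n+1-j}$ is exactly the statement that $\bm{J}\bm{A}\bm{J}=\bm{A}$, equivalently $\bm{A}\bm{J}=\bm{J}\bm{A}$, since $\bm{J}^{2}=\bm{I}$.

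From this, the first claim will follow quickly. Starting with the eigenvector equation $\bm{A}\bm{x}=\lambda\bm{x}$, I apply $\bm{J}$ and use commutativity to obtain $\bm{A}(\bm{J}\bm{x})=\bm{J}(\bm{A}\bm{x})=\lambda(\bm{J}\bm{x})$. Since $\bm{J}\bm{x}={}^{t}(x_{n},x_{n-1},\dots,x_{1})=\bm{x}'$, the vector $\bm{x}'$ lies in the eigenspace for $\lambda$. Because $\lambda$ is a simple eigenvalue, this eigenspace is one-dimensional, hence $\bm{x}'=\delta\bm{x}$ for some scalar $\delta\in\mathbb{C}$. Applying $\bm{J}$ once more (or iterating the relation $\delta x_{i}=x_{n+1-i}$ with its mirror) yields $\delta^{2}\bm{x}=\bm{x}$, and since $\bm{x}\ne\bm{0}$ we conclude $\delta^{2}=1$, so $\delta=\pm 1$ and $x_{i}=\delta x_{n+1-i}$.

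For the second claim, I apply the linear functional $\bm{x}\mapsto\sum_{i}x_{i}$ to both sides of $\bm{J}\bm{x}=\delta\bm{x}$. Since $\bm{J}$ merely permutes coordinates, the sum is invariant, giving $\sum_{i}x_{i}=\delta\sum_{i}x_{i}$. Under the hypothesis $\sum_{i}x_{i}\ne 0$, this forces $\delta=1$.

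There is no serious obstacle here; the only subtlety is the conceptual step of recognizing rotational symmetry as commutation with $\bm{J}$, after which everything reduces to the standard fact that eigenspaces of commuting operators are preserved, combined with simplicity of $\lambda$. The remaining work is purely bookkeeping: verifying $\bm{J}^{2}=\bm{I}$ and that coordinate reversal preserves coordinate sums.
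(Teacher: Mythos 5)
Your proof is correct and follows essentially the same route as the paper: show the reversed vector is again an eigenvector, use simplicity of $\lambda$ to get a scalar $\delta$, deduce $\delta^{2}=1$, and apply the coordinate-sum functional for the second claim. The only difference is cosmetic: you make explicit, via the reversal matrix $\bm{J}$ and the identity $\bm{J}\bm{A}\bm{J}=\bm{A}$, the step the paper dismisses as ``easy to show.''
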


\begin{prop}\label{sym5}
For $d \ge 0$, the $H$-polynomial is self-reciprocal; that is, $H_{i,d} = H_{d+1-i,d}$ for any $0\le i \le d$.
\begin{proof}
By Lemma \ref{sym1} and \ref{sym3}, the matrices $\bm{F}_d$ and $\bm{H}_d$ have the same eigenvalues $0!, 1!,\dots, (d+1)!$ and the vector $$  \bm{T}_d {}^t (F_{-1,d}, F_{0,d}, \dots, F_{d,d}) =  {}^t ( H_{0,d} ,H_{1,d} ,\dots,  H_{d+1,d})$$ is an eigenvector for $(d+1)!$ of $\bm{H}_d$. Lemma 2 (ii) of \cite{BW08} implies that $\bm{H}_d$ is rotationally symmetric, and we have $$ \sum^{d+1}_{i=0}  H_{i,d} = H_d(1) =F_d(0) =1 \not =0.$$ Hence, Lemma \ref{sym4} completes this proof.
\end{proof}
\end{prop}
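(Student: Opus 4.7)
The plan is to transfer the known eigenstructure of $\bm{F}_d$ across the similarity $\bm{H}_d = \bm{T}_d \bm{F}_d \bm{T}_d^{-1}$ provided by Lemma \ref{sym3}, and then force the resulting eigenvector of $\bm{H}_d$ to be palindromic by combining simplicity of the associated eigenvalue with rotational symmetry of $\bm{H}_d$.

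First I would promote the $F$-eigenvector to an $H$-eigenvector. By Lemma \ref{sym1}, the column ${}^t(F_{-1,d}, F_{0,d}, \dots, F_{d,d})$ is an eigenvector of $\bm{F}_d$ for the eigenvalue $(d+1)!$. Applying $\bm{T}_d$ and invoking Lemma \ref{sym3}, its image is an eigenvector of $\bm{H}_d$ for the same eigenvalue. Now $\bm{T}_d$ was set up to carry a coefficient vector ${}^t(a_{-1},a_0,\dots,a_d)$ of a polynomial $f(s) = \sum a_j s^{d-j}$ to its Taylor coefficients at $s=-1$, and $H_{i,d}$ is by definition the Taylor coefficient of $F_d(s)$ at $s=-1$ in $(s+1)^i$. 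Therefore the image vector is exactly ${}^t(H_{0,d}, H_{1,d}, \dots, H_{d+1,d})$.

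Next I would verify that $(d+1)!$ is a \emph{simple} eigenvalue of $\bm{H}_d$. Lemmas \ref{sym1} and \ref{sym3} together pin the spectrum of $\bm{H}_d$ down to $\{0!,1!,\dots,(d+1)!\}$, and these values are pairwise distinct. Combining simplicity with the rotational symmetry of $\bm{H}_d$ cited from Lemma 2(ii) of \cite{BW08}, the hypotheses of Lemma \ref{sym4} are satisfied for the eigenvector above, yielding $H_{i,d} = \delta\, H_{d+1-i,d}$ for some $\delta \in \{+1,-1\}$.

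The remaining step is to rule out $\delta = -1$. Using the defining identity $F_d(s-1) = \sum_{i=0}^{d+1} H_{i,d}\, s^i$ and setting $s=1$ gives $\sum_{i=0}^{d+1} H_{i,d} = F_d(0) = F_{d,d} = 1 \ne 0$, so the final clause of Lemma \ref{sym4} forces $\delta = 1$. The argument then finishes. There is no real obstacle: all the genuinely structural ingredients, namely the eigenstructure of $\bm{F}_d$, the explicit similarity via the Taylor expansion matrix, the rotational symmetry of $\bm{H}_d$, and the almost-symmetry lemma for eigenvectors of rotationally symmetric matrices, are already in hand, so the proof is just a matter of stringing them together in the correct order and checking the normalization $\sum_i H_{i,d} \ne 0$.
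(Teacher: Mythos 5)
Your proposal matches the paper's proof essentially step for step: transport the $(d+1)!$-eigenvector of $\bm{F}_d$ through the similarity of Lemma \ref{sym3}, identify its image with ${}^t(H_{0,d},\dots,H_{d+1,d})$ via the Taylor-expansion interpretation of $\bm{T}_d$, invoke the rotational symmetry of $\bm{H}_d$ together with Lemma \ref{sym4}, and rule out $\delta=-1$ from $\sum_i H_{i,d}=F_d(0)=1\neq 0$. The only nitpick is your claim that the eigenvalues $0!,1!,\dots,(d+1)!$ are pairwise distinct --- they are not, since $0!=1!$ --- but what Lemma \ref{sym4} actually needs, and what holds for $d\ge 1$, is only that $(d+1)!$ itself is a simple eigenvalue.
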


\begin{rema}
The results in this section give the answer to Problem 1 of \cite{BW08}.
\end{rema}

\subsection{Proof of Main Theorem}

We give a proof of our main theorem. 

We denote $\# \overline{N_i}  (P^{(k)})$ by $\overline{N_i}^{(k)}$. In particular, $\# \overline{N_i} (P)$ is denoted by $\overline{N_i}$.
\begin{proof}[Proof of Theorem \ref{main}]

We have the following recurrence: $$ \bm{F}_d'  \begin{pmatrix}  \overline{N_0}^{(k)} \\ \overline{N_1}^{(k)}  \\ \vdots \\ \overline{N_d}^{(k)}  \end{pmatrix} =\begin{pmatrix}  \overline{N_0}^{(k+1)} \\ \overline{N_1}^{(k+1)}  \\ \vdots \\ \overline{N_d}^{(k+1)}  \end{pmatrix} ,$$ where $\bm{F}_d' =( f_{i,j} )_{0 \le i,j \le d}.$ Consider the generating function $$ M_i(x) = \sum^{\infty}_{k=0} \overline{N_i}^{(k)} x^k$$ for $0\le i \le d.$ We show that \begin{align} M_i(x) &= \sum^{d-i}_{j=0}  \frac{  C_{j,i}  }{ 1-(d+1-j)! x   }  \label{gen1}\end{align} for some rational numbers $C_{j,i}$ and,  in particular, \begin{align} C_{0,i} &= \overline{N_d} F_{i,d}  \label{gen2}\end{align} by descending induction on $i$.

When $i=d$, multiply the recurrence $$f_{d,d} \overline{N_d}^{(k)} = (d+1)! \overline{N_d}^{(k)}  =\overline{N_d}^{(k+1)}$$ by $x^k$ and sum over $k \ge 0$, and we have
\begin{align*}
(d+1)! M_d(x)&= \frac{  M_d(x) -\overline{N_d} }{ x  } \\
M_d&= \frac{ \overline{N_d}  }{  1-(d+1)!x   }.
\end{align*}
Since $F_{d,d}=1$, the claim follows.

Suppose that the claim is true for $i+1, i+2,\dots, d$. By multiplying the recurrence $$ \overline{N_i}^{(k+1)} = f_{i,i} \overline{N_i}^{(k)}   +f_{i,i+1}  \overline{N_{i+1}}^{(k)} +\dots +f_{i,d} \overline{N_d}^{(k)} $$ by $x^k$ and sum over $k \ge 0$, we have \begin{align*} M_i(x)  = \frac{ \overline{N_i}  }{ 1-(i+1)! x   }   +  \sum^d_{j=i+1} \sum^{d-j}_{\ell =0}  \frac{  f_{i,j} C_{\ell, j} x  }{  (1-(i+1)!x) (1-(d+1-\ell )!x)  } .\end{align*} By partial fraction decomposition, the first claim follows, and we have \begin{align*}  & \sum^d_{j=i+1}  \frac{  f_{i,j}  }{  1-(i+1)!x  }   \frac{  C_{0,j} x  }{ 1-(d+1)!x   }  \\  =&\sum^d_{j=i+1}  \frac{ f_{i,j}  C_{0,j}  }{  (d+1)!-(i+1)!    }  \left(  \frac{-1}{1-(i+1)!x)} + \frac{1}{  1-(d+1)!x  } \right) .\end{align*} By the assumption of induction and \eqref{ind2}, we have \begin{align*}  C_{0,i}&= \sum^d_{j=i+1}  \frac{f_{i,j} C_{0,j} }{   (d+1)!-(i+1)! }   \\ &=\frac{   \overline{N_d} }{ (d+1)!-(i+1)!   } \sum^d_{j=i+1} f_{i,j}  F_{j,d } \\ &=\overline{N_d} F_{i,d},\end{align*} and the claim follows.

By \eqref{gen1}, we have \begin{align*} g_{P^{(k)}}(s)&=\sum^d_{i=0} \overline{N_i}^{(k)} s^i (1-s)^{d-i} \\&=\sum^d_{i=0} \left(  \sum^{d-i}_{j=0}  \left(  (d+1-j)! \right)^k C_{j,i} \right) s^i(1-s)^{d-i} .\end{align*} Put $$ \varepsilon_k(s)   =\sum^d_{i=0} \left(  \sum^{d-i}_{j=1}  \left(  (d+1-j)! \right)^k C_{j,i} \right) s^i(1-s)^{d-i} .$$ Then, by \eqref{gen2}, we have \begin{align}  g_{P^{(k)}} (s)&= ((d+1)!)^k \overline{N_d} \sum^d_{i=0}  F_{i,d} s^i (1-s)^{d-i} +  \varepsilon_k(s)   \notag  \\   &=((d+1)!)^k \overline{N_d} H_d(s)  +\varepsilon_k(s) .\label{gen4}\end{align}

Now, we are ready to obtain the result.

By Proposition \ref{sym5}, the coefficient $H_{0,d}$ of $s^d$ of $H_d(s)$ is zero and $H_{1,d}=H_{d,d}$. Since $H_{d,d}=\frac{1}{d!} F_d^{(d)}(-1)=F_{0,d}$ and $F_{0,d}$ is nonzero by the definition, we have the following: $$H_d(s) =F_{0,d} \prod_n (s-\alpha_n)^{e_n} ,$$ where $\sum_n e_n= d-1$ and $\prod_n \alpha_n = (-1)^{d-1}$.

Suppose that $R>0$ is sufficiently large such that the open ball $U(0: R)$ with the center zero of radius $R$ contains all zeros of $H_d(s)$. If $k$ is sufficiently large, then we have $$ | ((d+1)!)^k \overline{N_d} H_d(s)|  >|\varepsilon_k(s)|$$ on the circle $|s|=R$. Hence, Rouche's theorem implies that $g_{P^{(k)}} (s)$ has $d-1$ zeros in $U(0:R)$. Since the leading coefficient of $g_{P^{(k)}}(s)$ is $(-1)^d \chi(P)$ and $\chi(P)$ is nonzero, one of the $d$ zeros of $g_{P^{(k)}} (s)$ must be in the exterior of $U(0:R)$. Hence, the first result follows. Furthermore, since $ g_{ P^{(k)} } (s)$ is a polynomial with integral coefficients, the complex conjugation of the zero is also that of $ g_{ P^{(k)} } (s)$. Hence, the zero must be real.

Suppose that $\varepsilon >0$ is sufficiently small such that the open ball $U(\alpha_n : \varepsilon)$ does not intersect with $U(\alpha_m : \varepsilon)$ if $n \not=m.$ If $k$ is sufficiently large, then the inequality above holds on the circle   $|s-\alpha_n| =\varepsilon$. Hence, Rouche's theorem implies that $g_{P^{(k)}}  (s)$ has $e_n$ zeros in $U(\alpha_n :\varepsilon)$. Hence, $e_n$ zeros of $g_{P^{(k)}}(s)$ converge to $\alpha_n$ as $k\to \infty$. Hence, the second result follows. Since $\prod_n \alpha_n =(-1)^{d-1}$, the third result follows. \end{proof}

\subsection{  The growth of $| \beta_1^{(k)}|$ }

In this section, we estimate the growth of $| \beta_1^{(k)} |$ as $k\to \infty$. 

For two functions $f(x)$ and $g(x)$, define $f(x)\sim g(x)$ if $$\lim_{x\to \infty}  \frac{  f(x) }{  g(x)  } =1.$$

\begin{prop}\label{ES}
Under the same assumption of Theorem \ref{main}, we have  $$  |\beta_1^{(k)} | \sim \frac{   (d+1)!^k H_{1,d}  \# \overline{N_d}  }{    \chi(P) } $$ as $k\to \infty$.
\begin{proof}
By \eqref{gen4}, we have \begin{align*}  g_{P^{(k)}} (s)&=  \sum^d_{i=0}   \overline{N_i}^{(k)} s^i (1-s)^{d-i}   \notag  \\   &=((d+1)!)^k \overline{N_d} H_d(s)  +\varepsilon_k(s) .\end{align*} The degree is $d$ and the leading coefficient is $(-1)^d \chi (P).$ By Proposition \ref{sym5}, the constant term is $(d+1)!^k \overline{N_d} H_{1,d} + \varepsilon_k (s)_0$, where $\varepsilon_k(s)_0$ is that of $\varepsilon_k(s)$. By observing the constant terms of both side $$ g_{P^{(k)}}(s) = (-1)^d \chi(P)\prod^d_{i=1}  (s-\beta_i^{(k)}) ,$$ we have $$  \beta^{(k)}_1= \frac{  (d+1)!^k \overline{N_d} H_{1,d} +o((d+1)!^k)   }{   \chi(P)\prod^d_{i=2}  \beta_i^{(k)}  } ,$$
and Theorem \ref{main} completes this proof.

\end{proof}
\end{prop}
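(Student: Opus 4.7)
The plan is to read off $\beta_1^{(k)}$ from the constant term of the factorization of $g_{P^{(k)}}(s)$. Formula \eqref{gen4} already expresses $g_{P^{(k)}}(s)$ as a rescaled $H$-polynomial plus a subleading error, so evaluating at $s=0$ produces an asymptotic for $\chi(P)\prod_i \beta_i^{(k)}$, and Theorem \ref{main} then handles the factor $\prod_{i\ge 2} \beta_i^{(k)}$.

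First I would evaluate both sides of \eqref{gen4} at $s=0$. The constant term of $H_d(s)=\sum_{i=0}^{d+1} H_{i,d}s^{d-i}$ comes from the $i=d$ summand, so it equals $H_{d,d}$; by the self-reciprocity $H_{i,d}=H_{d+1-i,d}$ from Proposition \ref{sym5}, this is exactly $H_{1,d}$. For the error, only the $i=0$ summand of $\varepsilon_k(s)$ survives at $s=0$, giving $\sum_{j=1}^{d} ((d+1-j)!)^k C_{j,0}$, which is $O((d!)^k)=o(((d+1)!)^k)$. Combining,
$$g_{P^{(k)}}(0) = ((d+1)!)^k\,\overline{N_d}\, H_{1,d} + o\bigl(((d+1)!)^k\bigr).$$

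On the other hand, $g_{P^{(k)}}$ is a polynomial of degree $d$ with leading coefficient $(-1)^d\chi(P)$ and zeros $\beta_1^{(k)},\dots,\beta_d^{(k)}$, so
$$g_{P^{(k)}}(s) = (-1)^d\chi(P)\prod_{i=1}^d (s-\beta_i^{(k)}),$$
whose constant term equals $\chi(P)\prod_{i=1}^d \beta_i^{(k)}$. Equating the two expressions and isolating $\beta_1^{(k)}$ gives
$$\beta_1^{(k)} = \frac{((d+1)!)^k\,\overline{N_d}\, H_{1,d} + o(((d+1)!)^k)}{\chi(P)\prod_{i=2}^d \beta_i^{(k)}}.$$
Theorem \ref{main} furnishes $\prod_{i=2}^d \beta_i^{(k)} \to (-1)^{d-1}$, so the denominator tends to the nonzero constant $(-1)^{d-1}\chi(P)$, and passing to absolute values yields the stated equivalence.

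There is no real obstacle beyond bookkeeping: the entire argument hinges on \eqref{gen4} and on the identification $H_d(0)=H_{1,d}$, both of which are already in hand. The one point worth double-checking is that $\varepsilon_k(0)$ is indeed of strictly smaller order than the main term, but this is immediate since the remaining factorials $0!,1!,\dots,d!$ appearing in $\varepsilon_k$ are all strictly less than $(d+1)!$.
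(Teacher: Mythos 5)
Your proof is correct and follows essentially the same route as the paper: evaluate \eqref{gen4} at $s=0$, identify the constant term $H_d(0)=H_{d,d}=H_{1,d}$ via Proposition \ref{sym5}, note $\varepsilon_k(0)=o(((d+1)!)^k)$, and divide by $\chi(P)\prod_{i\ge 2}\beta_i^{(k)}$ using the convergence of that product from Theorem \ref{main}. The only difference is that you spell out the bookkeeping (which summands survive at $s=0$ and why the error is subleading) that the paper leaves implicit.
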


We estimate $H_{1,d}$.

\begin{lemma}\label{2-power}
For $0 \le j \le d,$ we have $h^{(d)}_{0,j} =2^{d-j}.$
\begin{proof}
We prove the claim by induction on $d$.

If $d=0$, then $h^{(0)}_{0,0}=1.$ Hence, the claim is true.

Suppose that the claim is true for $d-1.$ By Lemma \ref{BW}, we have \begin{align*} h^{(d)}_{0,j} =&\sum^{j-1}_{\ell =-1} h^{(d-1)}_{-1,\ell}  + \sum^{d-1}_{\ell =j} h^{(d-1)}_{0,\ell} \\=&1+\sum^{d-1}_{\ell = j}2^{d-1-\ell} \\=&2^{d-j}. \end{align*} Hence, the result follows.
\end{proof}
\end{lemma}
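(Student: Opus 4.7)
The plan is to count the permutations directly via a bijection, rather than to argue by induction. By definition $h^{(d)}_{0,j} = A(d+2, 1, j+2)$ is the number of permutations $\sigma$ of $[d+2]$ with $\sigma(1) = j+2$ and $\mathrm{des}(\sigma) = 1$. A permutation of $[d+2]$ with exactly one descent is uniquely determined by its first maximal increasing run $R_1 \subsetneq [d+2]$, with the complementary run $R_2 = [d+2] \setminus R_1$ then listed in increasing order; this is the parametrization I would exploit.

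The constraint $\sigma(1) = j+2$ is equivalent to $\min R_1 = j+2$, which forces $R_1 = \{j+2\} \sqcup S$ for some $S \subseteq \{j+3, j+4, \ldots, d+2\}$. Since this ambient set has exactly $d - j$ elements, there are $2^{d-j}$ candidate choices of $S$, hence at most $2^{d-j}$ permutations.

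I would then verify that each such choice really produces exactly one descent. Within either run there are no descents by construction, so the only potential failure mode is the junction not being a descent, i.e., $\max R_1 < \min R_2$. But because $j \ge 0$ we have $1 \notin R_1$ and hence $1 \in R_2$, so $\min R_2 = 1 < j+2 \le \max R_1$. Thus every $S$ yields exactly one descent, and the count is exactly $2^{d-j}$.

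The step I expect to need the most care with is the initial combinatorial setup---namely, that one-descent permutations are in bijection with proper subsets of $[d+2]$ via their first increasing run, and that ``proper'' is automatic here because $1$ is always forced into $R_2$. Once this parametrization is on firm ground, the boundary cases $j=d$ (giving $2^0 = 1$, realized by the single permutation $(d{+}2, 1, 2, \ldots, d{+}1)$) and $j=0$ (giving $2^d$, matching the table in Section \ref{vari}) are immediate, and no induction on $d$ or appeal to Lemma \ref{BW} is needed.
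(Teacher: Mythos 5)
Your proof is correct, and it takes a genuinely different route from the paper. The paper argues by induction on $d$, using the recurrence of Lemma \ref{BW}: $h^{(d)}_{0,j}=\sum_{\ell=-1}^{j-1}h^{(d-1)}_{-1,\ell}+\sum_{\ell=j}^{d-1}h^{(d-1)}_{0,\ell}=1+\sum_{\ell=j}^{d-1}2^{d-1-\ell}=2^{d-j}$, so it never touches the permutation-counting definition directly and instead leans on machinery already in place (and reused elsewhere, e.g.\ in Lemma \ref{simply}). You instead count $A(d+2,1,j+2)$ bijectively: a permutation with exactly one descent is recovered from its first increasing run $R_1$, the condition $\sigma(1)=j+2$ forces $R_1=\{j+2\}\sqcup S$ with $S\subseteq\{j+3,\dots,d+2\}$, and since $j\ge 0$ puts $1$ into $R_2$ the junction is always a genuine descent, giving exactly $2^{d-j}$ permutations. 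This is self-contained, explains combinatorially why the answer is a power of $2$, and makes visible where the hypothesis $j\ge 0$ is used (indeed for $j=-1$ the formula fails, e.g.\ $h^{(1)}_{0,-1}=1$, because $R_1$ could then be an initial segment and the junction need not descend). One small precision point: the general statement ``one-descent permutations are in bijection with proper subsets via their first run'' should exclude the initial segments $\{1,\dots,k\}$, which produce the identity; in your constrained setting this exclusion is exactly the check you carry out, so the argument is sound as written. The trade-off is that the paper's induction is shorter given that Lemma \ref{BW} is already needed, while your argument is more elementary and independent of that recurrence.
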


\begin{lemma}\label{simply}
For $d \ge 1,$ we have \begin{multline*}  h^{(d)}_{0,d} \le h^{(d)}_{0,d-1} \le \dots \le h^{(d)}_{0,-1} \le h^{(d)}_{1,d}\le \dots \le h^{(d)}_{1,-1} \le \dots \le  h^{(d)}_{  \left[  \frac{d-1}{2}\right] , d} \le \dots \le  h^{(d)}_{  \left[  \frac{d-1}{2}\right] , N} , \end{multline*}where $$N= \begin{cases} -1 & \text{if $d$ is even} \\ \left[  \frac{d-1}{2}\right] & \text{if $d$ is odd.} \end{cases}$$
\begin{proof}
When $d=1$, the sequence is $h^{(1)}_{0,1} =1 \le 2 = h^{(1)}_{0,0}.$ Hence, the claim follows.

Assume that the claim is true for $d-1$ and $d$ is even. Then, the assumption of induction and Lemma \ref{BW} imply $h^{(d)}_{i-1,-1}=h^{(d)}_{i,d}$ for any $0\le i \le \left[  \frac{d-1}{2}\right]$ and 
$$  h^{(d)}_{i,j-1} -h^{(d)}_{i,j} =-h^{(d-1)}_{i-1,j-1} +h^{(d-1)}_{i,j-1} \ge -h^{(d-1)}_{i-1,-1}+h^{(d-1)}_{i,d-1} =0 $$ for any $0 \le j \le d$ (Lemma \ref{sym5} is used if $0 \le j \le \left[  \frac{d-1}{2}\right]$ ).

If $d$ is odd, for any $0 \le i <\left[  \frac{d-1}{2}\right]$ and $0\le j \le d,$ Lemma \ref{sym5} implies 
\begin{align*}
h^{(d)}_{i,j-1} -h^{(d)}_{i,j}=&-h^{(d-1)}_{i-1,j-1} + h^{(d-1)}_{i,j-1} \\
=&-h^{(d-1)}_{i-1,j-1} +h^{(d-1)}_{d-1-i-1, d-1-(j-1)-1}\\
=&-h^{(d-1)}_{d'-1,j-1} +h^{(d-1)}_{d'-1, 2d'-1}, 
\end{align*}
where $d=2d'+1.$ Since $j-1 \ge 2d' -j,$ we have $h^{(d)}_{i,j-1} \ge h^{(d)}_{i,j}$. Hence, the result follows.
\end{proof}
\end{lemma}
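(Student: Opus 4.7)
My plan is to prove the lemma by induction on $d$, with the base case $d=1$ handled by direct inspection of $\bm{H}_1$. The long chain of inequalities splits naturally into two families: (i) the \emph{transitions} $h^{(d)}_{i-1,-1} \le h^{(d)}_{i,d}$ that glue the end of one row to the start of the next, and (ii) the \emph{within-row} comparisons $h^{(d)}_{i,j-1} \ge h^{(d)}_{i,j}$.

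For (i) I expect no induction at all: applying Lemma \ref{BW} to both sides, one of the summation ranges is empty (either $\sum_{\ell=-1}^{-2}$ on the left or $\sum_{\ell=d}^{d-1}$ on the right), so both expressions collapse to $\sum_{\ell=-1}^{d-1} h^{(d-1)}_{i-1,\ell}$. The transitions therefore in fact hold with equality.

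For (ii), Lemma \ref{BW} gives the telescoping identity
$$h^{(d)}_{i,j-1} - h^{(d)}_{i,j} = h^{(d-1)}_{i,j-1} - h^{(d-1)}_{i-1,j-1},$$
so it suffices to prove the column comparison $h^{(d-1)}_{i,j-1} \ge h^{(d-1)}_{i-1,j-1}$. My strategy is to sandwich
$$h^{(d-1)}_{i,j-1} \ge h^{(d-1)}_{i,d-1} = h^{(d-1)}_{i-1,-1} \ge h^{(d-1)}_{i-1,j-1},$$
where the two outer inequalities come from the inductive hypothesis (each row of $\bm{H}_{d-1}$ is smallest at its outer end) and the middle equality is the transition identity already established, applied at level $d-1$.

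The main obstacle will be the index bookkeeping across parities. The inductive hypothesis only asserts monotonicity within a given row up to the stopping column $N$ prescribed by the parity of $d-1$, so for columns $j-1$ outside this range in either row $i$ or row $i-1$ I will reflect via the rotational symmetry of $\bm{H}_{d-1}$ (Lemma~2(ii) of \cite{BW08}, which also underlies Proposition \ref{sym5}) into the covered region. The subtler case is $d$ odd, where the new top row $\lfloor (d-1)/2\rfloor$ was absent at level $d-1$; here rotational symmetry is essential in order to convert comparisons straddling the middle of $\bm{H}_{d-1}$ into ones that the inductive hypothesis directly controls.
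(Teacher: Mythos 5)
Your plan is correct and follows essentially the same route as the paper's proof: induction on $d$ with the $d=1$ base case, Lemma \ref{BW} yielding both the transition equalities $h^{(d)}_{i-1,-1}=h^{(d)}_{i,d}$ and the telescoping identity $h^{(d)}_{i,j-1}-h^{(d)}_{i,j}=h^{(d-1)}_{i,j-1}-h^{(d-1)}_{i-1,j-1}$, the sandwich through the level-$(d-1)$ transition, and the rotational symmetry of $\bm{H}_{d-1}$ to reflect the rows/columns not covered by the inductive chain (exactly how the paper treats the parity-dependent boundary cases, in particular the new top row when $d$ is odd). Your remark that the transitions are equalities provable from Lemma \ref{BW} alone, without induction, is a minor tidying of the paper's wording rather than a different argument.
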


\begin{lemma}\label{det}
Let $$  M=\begin{pmatrix} -A_1&a_{1,2}&\cdots&a_{1,n} \\a_{2,1} &-A_2&&\vdots \\\vdots&& \ddots &\vdots \\a_{n,1} &\cdots&\cdots&-A_n \end{pmatrix}$$ be an $n\times n$ matrix such that $A_j$ and $a_{i,j}$ are positive real numbers for any $i$ and $j$. Suppose that $\sum^n_{i=1, i\not = j} a_{i,j} < A_j$ for any $j$.
\begin{enumerate}
\item The sign of the determinant of $M$ is $(-1)^n$. 
\item If we give positive real numbers $b_1, b_2,\dots, b_n$ and replace the $j$th column of $M$, $1\ le j \le n,$ by ${}^t (-b_1, -b_2,\dots, -b_n)$, denote the matrix $M_j$, then the sign of the determinant of $M_j$ is also $(-1)^n$.
\end{enumerate}
\begin{proof}
We only give a proof of the first claim since the second can be proved similarly.

We prove it by induction on the size of the matrix.

If $n=1$, then we have $|M|=-A_1.$ Hence, te claim follows.

If we assume the truth of the claim for $n-1$, then we have
\begin{align*}
|M|=&\begin{vmatrix} -A_1 &a_{1,2}&a_{1,3}&\cdots&a_{1,n} \\ 0&-A_2+\frac{a_{2,1}}{A_1} a_{1,2}  & a_{2,3}+\frac{a_{2,1}}{A_1} a_{1,3}&\cdots&a_{2,n}+\frac{a_{2,1}}{A_1} a_{1,n} \\
0&a_{3,2}+\frac{a_{3,1}}{A_1} a_{1,2}&-A_3+\frac{a_{3,1}}{A_1} a_{1,3}&\cdots&a_{3,n}+\frac{a_{3,1}}{A_1} a_{1,n} \\
\vdots&\vdots&\vdots&\ddots&\vdots \\
0&a_{n,2}+ \frac{ a_{n,1} }{A_1}  a_{1,2}&a_{n,3} +\frac{ a_{n,1} }{A_1} a_{1,3}&\cdots&-A_n+\frac{ a_{n,1} }{A_1} a_{1,n}
\end{vmatrix} \\
=&-A_1\begin{vmatrix} a_{1,2}&a_{1,3}&\cdots&a_{1,n} \\ 
-A_2+\frac{a_{2,1}}{A_1} a_{1,2}  & a_{2,3}+\frac{a_{2,1}}{A_1} a_{1,3}&\cdots&a_{2,n}+\frac{a_{2,1}}{A_1} a_{1,n} \\
a_{3,2}+\frac{a_{3,1}}{A_1} a_{1,2}&-A_3+\frac{a_{3,1}}{A_1} a_{1,3}&\cdots&a_{3,n}+\frac{a_{3,1}}{A_1} a_{1,n} \\
\vdots&\vdots&\ddots&\vdots \\
a_{n,2}+ \frac{ a_{n,1} }{A_1}  a_{1,2}&a_{n,3} +\frac{ a_{n,1} }{A_1} a_{1,3}&\cdots&-A_n+\frac{ a_{n,1} }{A_1} a_{1,n}.
\end{vmatrix} 
\end{align*}
For any $2\le j\le n,$ the $j$th diagonal entry is negative and the sum of the $j$th column is 
$$-A_j+\sum^n_{i=2,i\not= j} a_{i,j} +\frac{a_{1,j}}{A_1}  \sum^n_{i=2} a_{i,1} <-A_j + \sum^n_{i=1, i\not = j} a_{a,j} <0. $$
Hence, the assumption of induction implies $\mathrm{sign} |M|= -1\times (-1)^{n-1} =(-1)^n,$ and the result follows.

\end{proof}
\end{lemma}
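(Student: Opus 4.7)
The plan is to prove both parts simultaneously by induction on $n$, using Gaussian elimination that pivots on the diagonal entry $-A_k$ of one of the ``original'' columns (a column retaining the structure: $-A_k$ on the diagonal, positive $a_{i,k}$ off-diagonal, and strict column-dominance). The only matrices that appear in the recursion are ones with at most one ``replaced'' (all-negative) column, so the inductive hypothesis is just the statement of the lemma.

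For the base case $n=1$ both parts are immediate: $\det M = -A_1 < 0$ and $\det M_j = -b_1 < 0$, giving sign $-1 = (-1)^1$. For part (1)'s inductive step I would perform $R_i \leftarrow R_i + (a_{i,1}/A_1) R_1$ for $i = 2, \ldots, n$ to clear column 1 below $-A_1$, then cofactor-expand along column 1 to obtain $\det M = -A_1 \cdot \det M'$, where $M'$ is the resulting $(n-1) \times (n-1)$ block. The crux is checking that $M'$ inherits the lemma's hypotheses: its new off-diagonal entries $a_{i,j} + (a_{i,1}/A_1) a_{1,j}$ are manifestly positive; its new diagonal entries $-A_j + (a_{j,1}/A_1) a_{1,j}$ are still negative because $a_{j,1} < A_1$ and $a_{1,j} < A_j$ by the column-dominance on $M$; and strict column-dominance for $M'$ follows by a short rearrangement of the original inequalities (essentially using $\sum_{i=2}^{n} a_{i,1} < A_1$). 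Induction then yields $\mathrm{sign}(\det M') = (-1)^{n-1}$, so $\mathrm{sign}(\det M) = (-1)^n$.

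For part (2), the key observation is that $M_j$ still contains $n-1$ original columns: every column except the replaced column $j$. I would therefore pick any column $k \neq j$ (for concreteness, $k = 1$ when $j \neq 1$, and $k = 2$ when $j = 1$), pivot on its diagonal entry $-A_k$ via $R_i \leftarrow R_i + (a_{i,k}/A_k) R_k$ for $i \neq k$, and cofactor-expand along column $k$ to get $\det M_j = -A_k \cdot \det M''$. Under this reduction, the remaining $n-2$ original columns transform exactly as in part (1), preserving the original structure, while the replaced column $j$ acquires entries $-b_i + (a_{i,k}/A_k)(-b_k)$, which are sums of negative terms and hence still all negative. So $M''$ is an $(n-1) \times (n-1)$ matrix of the same form handled by part (2): one all-negative column plus $n-2$ original columns. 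The inductive hypothesis gives $\mathrm{sign}(\det M'') = (-1)^{n-1}$, hence $\mathrm{sign}(\det M_j) = (-1)^n$.

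The main obstacle is the column-dominance bookkeeping for $M'$ in part (1): one has to verify the strict inequality $\sum_{i=2,\, i \neq j}^n \bigl[a_{i,j} + (a_{i,1}/A_1) a_{1,j}\bigr] < A_j - (a_{j,1}/A_1) a_{1,j}$ from the original hypotheses, which amounts to regrouping terms and invoking $\sum_{i=2}^{n} a_{i,1}/A_1 < 1$. Once this inequality is established, the same row-reduction template and a parallel induction dispose of both parts uniformly.
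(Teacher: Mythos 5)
Your proposal is correct and follows essentially the same route as the paper: pivot on $-A_1$ (resp.\ on a non-replaced column for part (2)), factor it out, check that the resulting $(n-1)\times(n-1)$ matrix inherits positive off-diagonal entries, negative diagonal, and strict column dominance via $\sum_{i\ge 2} a_{i,1} < A_1$, and induct. The paper only proves part (1) and dismisses part (2) as ``similar''; your explicit treatment of part (2) (pivoting on an original column and noting the replaced column stays all-negative) is precisely the intended similar argument.
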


\begin{lemma}\label{positive}
For any $1 \le i \le d,$ the number $H_{i,d}$ is positive.
\begin{proof}
Since the column vector $\mathbb{H} ={}^t ( H_{0,d}, H_{1,d},\dots, H_{d+1,d})$ is an eigenvector for $(d+1)!$ of $\bm{H}_d$, we have $$ \left(   \bm{H}_d -(d+1)!E\right) \begin{pmatrix}  H_{0,d} \\\vdots\\H_{d+1,d}\end{pmatrix} =\begin{pmatrix}  0\\ \vdots \\0 \end{pmatrix}.$$ Since all the entries in the $-1$ and $d$th rows are zero except for $h^{(d)}_{-1,-1} =h^{(d)}_{d,d} =1,$ we have $H_{0,d} =H_{d+1,d} =0.$ Let $\bm{H}_d'=( h^{(d)}_{i,j} )_{    0\le i,j \le d-1}.$ Since the eigenvalue $(d+1)!$ is simple, the rank of $\bm{H}_d'-(d+1)! E$ is $d-1$. The sum of any column in the matrix is zero; therefore, the $(d-1)$th row can be removed. Let $\bm{H}_d'' =(h^{(d)}_{i,j})_{0\le i \le d-2, 0\le j \le d-1}$. We have $$  \left( \bm{H}_d'' -(d+1)! E \right) \begin{pmatrix}  H_{1,d} \\ \vdots \\ H_{d-1,d}\end{pmatrix} =-H_{d,d} \begin{pmatrix}  h^{(d)}_{0,d-1}  \\\vdots \\ h^{(d)}_{d-2,d-1}\end{pmatrix} .$$
If we regard $H_{1,d},\dots, H_{d-1,d}$ as variables and $H_{d,d}$ as a constant, this equation uniquely determines $H_{1,d}, \dots, H_{d-1,d}$. Since $H_{d,d} =F_{0,d}>0$, by Cramer's fromula and Lemma \ref{det}, all $H_{i,d}$ are positive, and the result follows.
\end{proof}
\end{lemma}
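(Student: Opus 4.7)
The plan is to exploit the eigenvector equation $\bigl(\bm{H}_d - (d+1)!E\bigr)\mathbb{H} = 0$ together with the self-reciprocity just proved, and convert the question of positivity into a sign-of-determinant question that Lemma \ref{det} handles.

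First I would record two structural facts about the matrix $\bm{H}_d$. The $-1$st row and the $d$th row are zero except on the diagonal (this is visible in the sample matrices and easy to check from $A(d+2,i+1,j+2)$: no permutation of $[d+2]$ has more than $d+1$ descents or fewer than $0$, and the boundary rows correspond to the extreme cases). Consequently the eigenvector equation forces $H_{0,d} = H_{d+1,d} = 0$, and the eigenvalue relation reduces to the inner block $\bm{H}_d' = (h^{(d)}_{i,j})_{0\le i,j\le d-1}$. The second fact is that each column of $\bm{H}_d$ sums to $(d+1)!$, because $\sum_{i=-1}^{d} A(d+2,i+1,j+2) = (d+1)!$ (all permutations of $[d+2]$ starting with $j+2$). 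This makes $\bm{1}^t\bigl(\bm{H}_d' - (d+1)!E\bigr) = 0$, so the $(d-1)$st row of that system is redundant.

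Next I would move the variable $H_{d,d}$ to the right-hand side. This gives the $(d-1)\times(d-1)$ linear system
\[
\bigl(\bm{H}_d'' - (d+1)!E\bigr)\,{}^t(H_{1,d},\dots,H_{d-1,d}) \;=\; -H_{d,d}\,{}^t\bigl(h^{(d)}_{0,d-1},\dots,h^{(d)}_{d-2,d-1}\bigr),
\]
where $\bm{H}_d''$ is obtained from $\bm{H}_d'$ by deleting the redundant row. By the definition of $F_{0,d}$ as a sum of positive rationals and the identity $H_{d,d} = F_{0,d}$ (read off from the Taylor matrix $\bm T_d$), we have $H_{d,d}>0$, and the entries $h^{(d)}_{i,d-1}$ are positive for $0\le i\le d-2$ (again clear from the tables and from a short permutation-counting argument). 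Hence the right-hand side has the form $-H_{d,d}\,{}^t(b_1,\dots,b_{d-1})$ with all $b_i>0$.

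Now I would check the hypotheses of Lemma \ref{det} on the coefficient matrix $M := \bm{H}_d'' - (d+1)!E$. Off-diagonal entries $h^{(d)}_{i,j}$ with $i\ne j$, $0\le i\le d-2$, $0\le j\le d-1$ are positive (permutation counts, verifiable from the recursion of Lemma \ref{BW}). The $j$th diagonal entry is $h^{(d)}_{j,j}-(d+1)!$, which is negative since $h^{(d)}_{j,j}$ is only one of the positive summands of the column sum $(d+1)!$. Finally, for $0\le j\le d-2$ the column-sum control is immediate: the off-diagonal entries of $M$ in column $j$ sum to
\[
\sum_{\substack{0\le i\le d-2\\ i\ne j}} h^{(d)}_{i,j} \;=\; (d+1)! - h^{(d)}_{j,j} - h^{(d)}_{-1,j} - h^{(d)}_{d-1,j} - h^{(d)}_{d,j} \;<\; (d+1)! - h^{(d)}_{j,j},
\]
provided $h^{(d)}_{d-1,j}>0$, which is the step I expect to be the only real obstacle. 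One checks this either directly from Lemma \ref{BW} by an easy induction (the first interior row is strictly positive on the interior columns, since it already is in small cases and the recursion preserves positivity as long as something positive lies in the allowed summation range) or by observing that $A(d+2,d,j+2)$ counts permutations with exactly one ascent, for which an explicit positive formula is available.

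With Lemma \ref{det} applicable, Cramer's rule gives $H_{i,d} = \det(M_i)/\det(M)$ where $M_i$ has its $i$th column replaced by $-H_{d,d}(b_1,\dots,b_{d-1})^t$, i.e.\ by a vector whose entries are all negative. Both $\det(M)$ and $\det(M_i)$ then have sign $(-1)^{d-1}$ by the two parts of Lemma \ref{det}, so $H_{i,d}>0$ for $1\le i\le d-1$. The remaining case $H_{d,d}>0$ has already been noted, and self-reciprocity (Proposition \ref{sym5}) then covers $1\le i\le d$, completing the proof.
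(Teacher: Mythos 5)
Your proposal follows the paper's own proof essentially verbatim: the boundary rows force $H_{0,d}=H_{d+1,d}=0$, the column-sum identity makes one row of the reduced system redundant, and the resulting $(d-1)\times(d-1)$ system is solved by Cramer's rule together with Lemma \ref{det}. The only difference is that you explicitly verify the hypotheses of Lemma \ref{det} (positivity of the relevant $h^{(d)}_{i,j}$ and the strict column-sum bound via $h^{(d)}_{d-1,j}>0$), which the paper leaves implicit; this is a correct and welcome addition rather than a deviation.
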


\begin{prop}\label{N3}
For $d\ge 1$, we have $$ \frac{ \sqrt{2}^d  }{   (d+1)!d }  \le H_{1,d} \le \frac{    2^{d+1}   }{   (d+1)!   } .$$
\begin{proof}
Since ${}^t (H_{0,d} ,\dots, H_{d+1,d})$ is an eigenvector for $(d+1!)$ of $\bm{H}_d$, we have 
\begin{align*}
h^{(d)}_{0,0} H_{1,d}+h^{(d)}_{0,1}H_{2,d}\dots +h^{(d)}_{0,d-1} H_{d,d}=&(d+1)! H_{1,d} \\
h^{(d)}_{1,0} H_{1,d}+h^{(d)}_{1,1}H_{2,d}\dots +h^{(d)}_{1,d-1} H_{d,d}=&(d+1)! H_{2,d} \\
\vdots& \\
h^{(d)}_{d-1,0} H_{1,d}+h^{(d)}_{d-1,1}H_{2,d}\dots +h^{(d)}_{0,d-1} H_{d,d}=&(d+1)! H_{d,d}.
\end{align*}
Since $\sum^d_{i=1} H_{i,d}=1$ and all $H_{i,d}$ are positive (Lemma \ref{positive}), Lemma \ref{2-power} implies \begin{align*} (d+1)!H_{1,d} =& h^{(d)}_{0,0} H_{1,d} +\dots + h^{(d)}_{0,d-1} H_{d,d} \\\le& h^{(d)}_{0,0}  +\dots +h^{(d)}_{0,d-1} \\\le & 2^{d-1}.\end{align*}
 Hence, the right inequality follows.
 
By Lemma \ref{simply}, \ref{positive}, and Proposition \ref{sym5}, $H_{   \left[  \frac{d+1}{2}\right],d   }$ is the maximum among $H_{1,d},\dots, H_{d,d}$. Since $\sum^d_{i=1} H_{i,d}=1,$ we have $H_{ \left[  \frac{d+1}{2}\right],d} \ge \frac{1}{d}.$ Hence, Lemma \ref{2-power} implies $$ (d+1)!  H_{1,d}> h^{(d)}_{0,\left[  \frac{d+1}{2}\right]-1} H_{ \left[  \frac{d+1}{2}\right],d  }   \ge 2^{  d-\left[  \frac{d+1}{2}\right]+1  }  \frac{1}{d}  \ge \sqrt{2}^d \frac{1}{d}.$$
Hence, the result follows.

\end{proof}
\end{prop}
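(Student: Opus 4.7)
The engine of the argument is the eigenvector equation $\bm{H}_d \mathbb{H} = (d+1)!\,\mathbb{H}$ coming from Lemmas \ref{sym1} and \ref{sym3}, where $\mathbb{H} = {}^t(H_{0,d},\ldots,H_{d+1,d})$. Reading off the row indexed by $i=0$, together with $H_{0,d}=H_{d+1,d}=0$ (established in the proof of Lemma \ref{positive}) and the closed form $h^{(d)}_{0,j}=2^{d-j}$ of Lemma \ref{2-power}, yields the master identity
\[
(d+1)!\, H_{1,d} \;=\; \sum_{j=0}^{d-1} 2^{d-j}\, H_{j+1,d}.
\]
Both bounds will be extracted from this single identity, used in conjunction with the two facts that every $H_{i,d}$ with $1\le i\le d$ is strictly positive (Lemma \ref{positive}) and that $\sum_{i=1}^{d} H_{i,d} = H_d(1) = F_d(0) = 1$.

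For the upper bound, replace every $H_{j+1,d}\in(0,1]$ in the master identity by $1$ and sum the geometric series:
\[
(d+1)!\, H_{1,d} \;\le\; \sum_{j=0}^{d-1} 2^{d-j} \;=\; 2^{d+1}-2 \;<\; 2^{d+1},
\]
which after dividing by $(d+1)!$ gives exactly the stated upper bound.

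For the lower bound the plan is to keep only one, carefully chosen, term of the master identity. Because the $d$ positive numbers $H_{1,d},\ldots,H_{d,d}$ sum to $1$, the maximum $H_{i^*,d}$ is at least $1/d$. By the symmetry $H_{i,d}=H_{d+1-i,d}$ of Proposition \ref{sym5}, this maximum is attained symmetrically on both sides of the midpoint, so we may assume $i^* \le \lfloor (d+1)/2\rfloor$. (The combinatorial inequalities in Lemma \ref{simply} in fact pin the maximum down to exactly $i^* = \lfloor (d+1)/2\rfloor$, but only the upper bound on $i^*$ is needed here.) Keeping only the $j = i^*-1$ term,
\[
(d+1)!\, H_{1,d} \;\ge\; h^{(d)}_{0,\,i^*-1}\, H_{i^*,d} \;=\; 2^{d-i^*+1}\, H_{i^*,d} \;\ge\; \frac{2^{d-\lfloor(d+1)/2\rfloor+1}}{d}.
\]
A routine parity check shows $d-\lfloor(d+1)/2\rfloor+1 \ge d/2$ in both cases (for $d=2k$ one has $k+1\ge k$, and for $d=2k+1$ one has $k+1\ge k+\tfrac12$), so the right-hand side is at least $\sqrt{2}^{\,d}/d$.

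The main obstacle is locating a large entry of $\mathbb{H}$. Proposition \ref{sym5} supplies symmetry, and combined with the positivity of Lemma \ref{positive} this already forces some maximum to sit in the left half $\{1,\ldots,\lfloor(d+1)/2\rfloor\}$, which is exactly what the geometric decay factor $2^{d-j}$ needs to produce a bound of order $\sqrt{2}^d$. Everything after that step is a one-line estimate from the first row of the eigenvector equation.
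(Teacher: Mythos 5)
Your proof is correct and follows essentially the same route as the paper: both arguments read off the $i=0$ row of the eigenvector equation $\bm{H}_d\mathbb{H}=(d+1)!\,\mathbb{H}$, use $h^{(d)}_{0,j}=2^{d-j}$ together with positivity and $\sum_{i=1}^d H_{i,d}=1$ for the upper bound, and isolate a single term $h^{(d)}_{0,i^*-1}H_{i^*,d}$ with $H_{i^*,d}\ge 1/d$ for the lower bound. The only (harmless) deviations are that you locate a maximum in the left half $\{1,\dots,\lfloor(d+1)/2\rfloor\}$ using only the symmetry of Proposition \ref{sym5}, bypassing the monotonicity of Lemma \ref{simply} which the paper invokes to pin the maximum exactly at $\lfloor(d+1)/2\rfloor$, and that your geometric sum $2^{d+1}-2$ corrects the paper's evident misprint ``$\le 2^{d-1}$'' in the upper-bound display.
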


\section{A plan for application}
We give a plan for application of the main theorem to the distribution of primes.

\subsection{Historical background}

For a positive real number $x$, let $\pi(x)$ be the number of primes not exceeding $x$. This function is in a central place in number theory, and it irregularly increases as $x\to \infty$. However, the prime number theorem states that, surprisingly, the elementary function $\frac{x}{\log x}$ approximates to $\pi(x)$: $$ \lim_{x\to \infty}  \frac{\pi(x)}{  x/\log x   } =1.$$ The first proof of the fact was given by Hadamard and de la Vall\'ee Poussin independently, and they used the Riemann zeta function $\zeta(s)$. See, for example, Chapter III of \cite{Tit86}.

The prime number theorem has many equivalent propositions, and Bj\"orner gave a topological interpretation to one of them \cite{Bjo11}.

For a squarefree positive integer $k$, let $P(k)$ be the set of prime factors of $k$. For any $n\ge 2$, define an abstract simplicial complex $\Delta_n$ to be the set of $P(k)$ for all squarefree integers $2\le k\le n.$ A family $\Delta$ of nonempty subsets of a finite set is an \textit{abstract simplicial complex} if $\Delta$ is closed under taking subsets. Then, he gave the following equivalence: $$ \text{Prime Number Theorem} \Leftrightarrow\chi(\Delta_n) =o(n) .$$ Moreover, he pointed out the following equivalence: $$  \text{Riemann Hypothesis} \Leftrightarrow \chi(\Delta_n) =O(n^{\frac{1}{2} +\varepsilon}).$$The Euler characteristic $\chi(\Delta)$ of an abstract simplicial complex $\Delta$ is given by $\chi(\Delta)=\sum_{A \in \Delta} (-1)^{\# A -1}$. In fact, $\chi(\Delta_n)$ is almost the Mertens function $M(n)$; that is, $-M(n)=\chi(\Delta_n)-1$. The function $M(x)$ is defined by $\sum_{n\le x} \mu (n)$, where $\mu(n)$ is the classical M\"obius function. If $n=p_1 p_2\dots p_k$, $p_i\not = p_j$, then $\mu(n)=(-1)^k$ and $(-1)^{\# P(k)  -1  }  =(-1)^{k-1}$. Since $\mu(1)=1$, the equality follows. Theorem 4.14 and 4.15 of \cite{Apo} and Theorem 14.25 (C) imply the two equivalences.

The first step to study $\chi(\Delta_n)$ should be to study the homology group of $\Delta_n$. Bj\"orner tried it, but he found that $\Delta_n$ has the homotopy type of a wedge of spheres. Namely, the homology group of $\Delta_n$ is almost trivial. He concluded that ``perhaps a study of deeper topological invariants of $\Delta_n$ could add something of value''.

\subsection{Strategy}
We give a continuation of Bj\"orner's work by the zeta series of finite posets.

\begin{defi}
Define a poset $P_n$ , $n\ge 2,$ to be the set of squarefree integers $2\le k \le n$ and give an order by divisibility. Namely, $a\le b$ if and only if $a| b$.
\end{defi}

The dimension of $P_n$ is $d$ if and only if $p_1p_2\dots p_{d+1} \le n < p_1p_2\dots p_{d+2}$, where $(p_1, p_2,p_3,\dots)$ is the sequence of primes $(2,3,5,\dots)$. 

\begin{exam}
The poset $P_6$ is $$  \xymatrix{ &6&& \\  2\ar[ur] &3\ar[u]&5 .}$$ The Euler characteristic is two and the dimension is one.

The poset $P_{30}$ is $$\xymatrix{   &30&&&&& \\6\ar[ur]&10 \ar[u]&14&15 \ar[ull]&21&22&26 \\
2\ar[u] \ar[ur] \ar[urr] \ar[urrrrr] \ar[urrrrrr] &3 \ar[ul] \ar[urr] \ar[urrr] &5 \ar[ur] \ar[ul] &7 \ar[ul] \ar[ur] &11\ar[ur] &13\ar[ur] &17,19,23,29}$$ The Euler characteristic is four and the dimension is two.
\end{exam}

We have the increasing sequence of posets:
$$ *=P_2\subset P_3=P_4 \subset P_5\subset P_6\subset \cdots. $$

\begin{exam}
If $n$ is small, it is easy to compute $\chi(P_n)$.

\begin{center}
\begin{tabular}{ccccccccccccccc} \toprule
$n$&2&3&4&5&6&7&8&9&10  &11&12&13&14&15 \\ \midrule
$\chi(P_n)$&1&2&2&3&2&3&3&3&2 &3&3&4&3&2 \\ \bottomrule
\end{tabular}

\begin{tabular}{ccccccccccccccc} \toprule
$n$&16&17&18&19&20&21&22&23&24&25&26&27&28&29 \\ \midrule
$\chi(P_n)$ &2&3&3&4&4&3&2&3&3&3&2&2&2&3  \\ \bottomrule
\end{tabular}

\begin{tabular}{ccccccccccccccc} \toprule
$n$&30&31&32&33&34&35&36&37&38&39&40&41&42&43  \\ \midrule
$\chi(P_n)$&4&5&5&4&3&2&2&3&2&1&1&2&3&4 \\ \bottomrule

\end{tabular}

\end{center}

\end{exam}

We can observe the oscillation of  $\chi(P_n) $. The Euler characteristic $\chi(P_n)$ is not always positive. Indeed, $\chi(P_{95})=-1$ and 95 is the smallest integer whose Euler characteristic is negative.

The poset $P_n$ is the face poset of $\Delta_n$. For an abstract simplicial complex $\Delta$, the \textit{face poset} $F(\Delta)$ of $\Delta$ is $\Delta$ itself as a set and its order is given by inclusion. The Euler characteristics of $\Delta$ and $F(\Delta)$ coincide. Indeed, we have
\begin{align*}
\chi(F(\Delta))=&\sum^{\dim F(\Delta)}_{i=0} (-1)^i \# \overline{N_i} (F(\Delta)) \\
=&\sum_{A\in \Delta} \sum^{\dim \Delta}_{i=0} (-1)^i \# \overline{N_i} (F(\Delta))_A \\
=&\sum^{\dim \Delta}_{d=0} \sum_{A\in \Delta, \# A=d+1} \sum^d_{i=0} (-1)^i f_{i,d} \\
=&\sum^{\dim \Delta}_{d=0} \sum_{A\in \Delta, \# A=d+1} (-1)^d \\
=&\chi(\Delta),
\end{align*}
where $\overline{N_i}(F(\Delta))_A$ is the set of chains of length $i$ in $F(\Delta)$ whose target is $A$, and $f_{i,d}$ is the number defined in \S \ref{vari}. It is easy to show $$ \sum^d_{i=0} (-1)^i f_{i,d} =(-1)^d $$ by \eqref{ind1} and induction on $d$.

We obtain the following: 
$$ \text{Prime Number Theorem} \Leftrightarrow\chi(P_n) =o(n) .$$$$  \text{Riemann Hypothesis} \Leftrightarrow \chi(P_n) =O(n^{\frac{1}{2} +\varepsilon}).$$
Hence, it is important to estimate $|\chi(P_n)|$ as $n\to \infty$.

In addition, the following estimation $$  \int^X_2 \left(  \frac{\chi(P_{[x]})}{x}  \right)^2 dx = O(\log X) $$ implies the simplicity of the zeros of the Riemann zeta function by Theorem 14.29(A) of \cite{Tit86}. The simplicity and the Riemann Hypothesis are major problems in number theory.

At first glance, $|\chi(P_n)|$ is very smaller than $n$ in the table, however, the oscillation is very complicated if $n$ is large. Although Mertens conjectured $$ |M(x)| \le \sqrt{x} $$ for $x>1$, Odlyzko and Riele disproved it \cite{OR85}. Namely, the inequality is violated infinitely many times. They showed the existence of counter examples to the conjecture, but no examples have been found concretely.

Let us begin to try the problem.

Assume that $\chi(P_n)$ is nonzero (we do not need to estimate $\chi(P_n)$ if it is zero). By Lemma \ref{zero}, we have $$  \chi(P_n) = \frac{  \# \overline{N_0}(P_n)  }{ \prod_{i=1}^{d_n} \beta_{i,n}  } ,$$where $d_n =\dim P_n.$ We have to estimate all $| \beta_{i,n}|$ as precisely as possible, but it is getting very harder since the number of zeros $d_n$ steadily increases as $n \to \infty$. By applying the main theorem, we obtain the following:

\begin{lemma}\label{N5}
If $n \ge 6$ and $\chi(P_n)\not = 0$, then $$ |\chi(P_n)|  \sim \frac{   (d_n+1)!^{k_n} H_{1,d_n}  \# \overline{N_{d_n}} (P_n)  }{       |\beta^{(k_n)}_{1,n}| } $$ as $k\to \infty.$
\begin{proof}
It directly follows from Proposition \ref{ES}.
\end{proof}
\end{lemma}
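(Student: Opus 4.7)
The plan is to read off the claimed asymptotic directly from Proposition \ref{ES} applied to $P = P_n$. Before invoking that proposition I need to check its two hypotheses: that $\dim P_n \ge 1$ and that $\chi(P_n) \ne 0$. The second is exactly the hypothesis of the lemma, and the first is immediate for $n \ge 6$, since $6 = 2\cdot 3$ is squarefree, so $\{2\} \subset \{2,3\} = P(6)$ belongs to $\overline{N_1}(P_n)$ and $\dim P_n \ge 1$.

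With the hypotheses verified, Proposition \ref{ES} delivers
$$|\beta_1^{(k_n)}| \sim \frac{(d_n+1)!^{k_n}\, H_{1,d_n}\, \#\overline{N_{d_n}}(P_n)}{\chi(P_n)}$$
as $k_n \to \infty$. Since $|\beta_1^{(k_n)}|$ is nonnegative, and the factors $(d_n+1)!^{k_n}$ and $\#\overline{N_{d_n}}(P_n)$ are positive, while $H_{1,d_n}>0$ by Lemma \ref{positive}, the right-hand side must have the same sign as $\chi(P_n)$ for all sufficiently large $k_n$. Taking absolute values on both sides therefore preserves the asymptotic relation and yields
$$|\beta_1^{(k_n)}| \sim \frac{(d_n+1)!^{k_n}\, H_{1,d_n}\, \#\overline{N_{d_n}}(P_n)}{|\chi(P_n)|},$$
after which solving for $|\chi(P_n)|$ gives the stated conclusion.

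There is essentially no obstacle: the lemma is a direct reformulation of Proposition \ref{ES}, rewritten so that $|\chi(P_n)|$ is expressed in terms of the growth of $|\beta_1^{(k_n)}|$ rather than the other way round. The only substantive point to be careful about is the sign of $\chi(P_n)$ (which can indeed be negative, as illustrated by $\chi(P_{95}) = -1$ in the surrounding discussion); this is handled transparently by the absolute-value step above. The real interest of the statement is not its proof but its role as the entry point for the number-theoretic strategy developed in the remainder of the section.
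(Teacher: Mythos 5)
Your proof is correct and follows exactly the paper's route: Lemma \ref{N5} is read off directly from Proposition \ref{ES} applied to $P=P_n$, with the hypotheses $\dim P_n\ge 1$ (for $n\ge 6$) and $\chi(P_n)\neq 0$ checked. Your extra care with the absolute value of $\chi(P_n)$ is a welcome refinement the paper glosses over, but it does not change the argument.
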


Hence, $|\chi(P_n)|$ is almost the right hand side. The product of the zeros except for $\beta^{(k_n)}_{1,n}$ converges to $(-1)^{d_n}$. Only one of the zeros remains, and we do not have any loss at this stage.

Next, we estimate the growth of the dimension of $P_n$.

\begin{prop}\label{dim}
We have $$ d_n=\dim P_n= \frac{\log n}{ \log \log n}   +O\left(  \frac{\log n}{(\log \log n)^2}\right) .$$
\begin{proof}
By Theorem 4.7 of \cite{Apo}, we have $$ C_1 n\log n < p_n < C_2 n\log n  $$ for some constants $C_1, C_2,$ and any $n\ge 2$. For example, put $C_1=\frac{1}{6}$ and $C_2=24$. Then, $C_1< p_1=2<C_2$. By the definition, $\dim P_n =d$ if and only if $$ p_1 p_2\dots p_{d+1} \le n < p_1 p_2 \dots p_{d+2}.$$ Hence, we have 
\begin{align}
C_1^{d+1} (d+1)! \prod^{d+1}_{m=2} \log m < n < C_2^{d+2} (d+2)! \prod^{d+2}_{m=2} \log m. \label{put1}
\end{align}
Since the function $\frac{\log x}{\log \log x}$ is simply increasing in $[e^e,\infty)$, we have 
$$ \frac{\log n}{\log \log n} < \frac{   (d+2) \log C_2 +\log (d+2)!  +\sum^{d+2}_{m=2} \log \log m     }{  \log \left( (d+2) \log C_2 +\log (d+2)!  +\sum^{d+2}_{m=2} \log \log m   \right)   }$$ if $n\ge e^e$. By integral test, we have
\begin{multline*}
d \log \log d - \Li d+C\le \sum^{d+1}_{m=2} \log \log m \le (d+1) \log \log (d+1) - \Li (d+1) +C,
\end{multline*}
where $\Li x=\int^x_2 \frac{dt}{\log t}$ and $C= \int^e_2 \frac{dt}{\log t} + \log \log 2$. By Stirling's formula, we have , for $\varepsilon >0$, 
\begin{align}
\frac{\log n}{\log \log n }<& \frac{   (d+2)\log C_2 + (d+3)\log (d+3) + (d+2)\log \log (d+2)      }{    \log (d+3) +\log \log (d+3)     } \notag \\
<& (1+\varepsilon) d\label{put2}
\end{align}
if $n$ is sufficiently large. If we replace $\varepsilon$ by$$\frac{2\log C_2}{   \log d +\log \log d} ,$$ the inequality \eqref{put2} holds. Hence, we have
\begin{align*}
d>&\frac{1}{1+\varepsilon} \frac{\log n}{\log \log n}\\
=& \frac{\log n}{\log \log n} -\frac{\varepsilon}{1+\varepsilon}\frac{\log n}{\log \log n} \\
>&\frac{\log n}{\log \log n}-\frac{2\log C_2}{\log d} \frac{\log n}{\log \log n}.
\end{align*}
If we put $\varepsilon =\frac{1}{2}$ and take logarithm in \eqref{put2}, we have
\begin{align}
\log d > \log \left(   \frac{2}{3}  \frac{\log n}{\log \log n} \right)> \frac{1}{2} \log \log n. \label{put3}
\end{align}
Hence, we obtain $$ d> \frac{\log n}{\log \log n} -4\log C_2  \frac{\log n}{  (\log \log n)^2   }  .$$

By the left inequality of \eqref{put1}, we can similarly obtain $$ \frac{\log n}{\log \log n} \ge  (1-\varepsilon)d, $$ and we can replace $\varepsilon $ by $$  \frac{2\log C_1}{\log d +\log \log d} . $$By \eqref{put3}, we obtain $$  d\le \frac{\log n}{\log \log n}   +4 \log C_1 \frac{\log n}{ ( \log \log n )^2  }.$$ Hence, the result follows.
\end{proof}
\end{prop}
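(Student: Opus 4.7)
The plan is to convert the primorial condition defining $d_n$ into an asymptotic equation in $d$ and then invert it. The starting point is a Chebyshev-type bound (Theorem 4.7 of \cite{Apo}): there exist absolute constants $0 < C_1 < C_2$ with $C_1 k \log k < p_k < C_2 k \log k$ for all $k \ge 1$.

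Next, I would translate the definition $\dim P_n = d$ into the double inequality $p_1 p_2 \cdots p_{d+1} \le n < p_1 p_2 \cdots p_{d+2}$, take logarithms, and apply the Chebyshev bound termwise to control $\log n$ above and below by sums of the form $\sum (\log k + \log\log k)$ plus $O(d)$. Stirling's formula gives $\log(d+1)! = d\log d - d + O(\log d)$, and the integral test applied to the monotone function $\log\log x$ yields an estimate
\[
\sum_{m=2}^{d+1}\log\log m = (d+1)\log\log(d+1) - \Li(d+1) + O(1).
\]
Combining these, after absorbing $\Li(d)$ and the other lower order terms into an overall error, should produce a two-sided estimate of the shape $\log n = d\log d + O(d\log\log d)$.

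The main obstacle is the inversion step: recovering $d$ from $\log n$ with the sharp error $O(\log n/(\log\log n)^2)$. Setting $y=\log n$ and working from, say, $y\le (1+\varepsilon) d\log d$ with $\varepsilon\to 0$, I would bootstrap as follows. A first crude application shows $d\sim y/\log y$, and hence $\log d = \tfrac{1}{2}\log\log n + O(1)$; substituting this back into the relation between $y$ and $d\log d$ lets one solve for $d$ up to the desired precision. The delicate point is that a naive single inversion would lose a factor of $\log\log\log n$ in the error, so one must replace the formal $\varepsilon$ by an explicit quantity of order $1/\log d$ — this is exactly the role played by the author's substitution $\varepsilon = 2\log C_i/(\log d + \log\log d)$. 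Performing the same careful bookkeeping on both the upper and lower Chebyshev sides yields the matching two-sided estimate claimed in the proposition.
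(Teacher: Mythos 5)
Your overall route is the paper's route: Chebyshev bounds $C_1k\log k<p_k<C_2k\log k$, the primorial sandwich $p_1\cdots p_{d+1}\le n<p_1\cdots p_{d+2}$, Stirling plus the integral test for $\sum\log\log m$, and then an inversion in which the formal $\varepsilon$ is replaced by an explicit quantity of size about $1/\log d$. But there is a genuine gap in your bookkeeping at the intermediate step. If you really absorb everything beyond $d\log d$ into the error and work from ``$\log n=d\log d+O(d\log\log d)$'', you can no longer reach the stated error term, no matter how cleverly you choose $\varepsilon$: from that estimate alone one only gets
$$\frac{\log n}{\log\log n}=d\left(1+O\!\left(\frac{\log\log d}{\log d}\right)\right),$$
so the best admissible $\varepsilon$ is of order $\log\log d/\log d$, and the final error is $O\!\left(\log n\,\log\log\log n/(\log\log n)^2\right)$ --- exactly the $\log\log\log n$ loss you say you will avoid. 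The substitution $\varepsilon\asymp 1/\log d$ is not a separate trick that repairs this; it is only \emph{available} if you have kept the second-order term with its exact coefficient, i.e.\ the two-sided estimate $\log n=d\log d+d\log\log d+O(d)$ (only the $O(d)$ may be discarded). The point is that the $d\log\log d$ term is precisely what pairs with the $\log\log d$ part of $\log\log n=\log d+\log\log d+O(\log\log d/\log d)$, so that
$$\frac{\log n}{\log\log n}=\frac{d(\log d+\log\log d)+O(d)}{\log d+\log\log d+O(\log\log d/\log d)}=d\left(1+O\!\left(\frac{1}{\log d}\right)\right),$$
which gives $d=\frac{\log n}{\log\log n}+O\!\left(\frac{\log n}{(\log\log n)^2}\right)$. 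This is what the paper does implicitly by never collapsing $\log(d+2)!+\sum\log\log m$ into a single error term: it compares $\frac{\log n}{\log\log n}$ with the full expression, using the monotonicity of $x/\log x$, and only then linearizes with $\varepsilon=2\log C_i/(\log d+\log\log d)$. So: correct skeleton, same approach as the author, but your stated intermediate estimate is too lossy and, as written, the argument does not prove the proposition with the claimed error term; fix it by carrying the $d\log\log d$ term (coefficient $1$) through the inversion.
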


If $n=p_1^{e_1} p_2^{e_2}\dots p_d^{e_d}$, then the \textit{weight} of $n$ is defined by $\sum^d_{i=1} e_i$. Denote $\pi_d(x)$ the number of positive squarefree integers of weight $d$ not exceeding a real number $x$.

\begin{lemma}\label{N1}
We have $$  \pi_d(p_1p_2\dots p_{d+1} )  \le C^d $$ for some constant $C>0$.
\begin{proof}
Suppose that a sequence of primes $q_1, q_2,\dots, q_d$ satisfies $q_1 q_2\dots q_d \le p_1 p_2 \dots p_{d+1}$ and $q_1<\dots <q_d$. We count the number of such $(q_1,\dots, q_d)$. The greatest member $q_d$ does not exceed $p_{d^3}$, since $q_1 q_2\dots q_d \ge p_1p_2 \dots p_{d-1} q_d$; that is, $$    q_d \le p_d p_{d+1}  \le C_2^2 (d+1)^2 \log^2 (d+1) . $$ Hence, we choose $q_1,q_2,\dots, q_d$ from the set $\{  p_1,p_2,\dots,p_{d^3}    \} .$

We choose $m$ satisfying $m \ge \frac{e^2 C_2}{C_1}$.  The number of $i, 1\le i \le d,$ such that $q_i \ge p_{md}$ is smaller than $[\log d]$. Indeed, if $ q_1,q_2,\dots, q_{d-[\log d]}   <p_{md} $ and $ q_{d-[\log d] +1} , \dots, q_d \ge p_{md}$, then we have 
$$  q_1q_2 \dots q_d \ge p_1 p_2 \dots p_{d-[\log d]}  p_{md}p_{md+1}  \dots p_{md+[\log d]} .$$ Hence, it must be that $$p_{md}\dots p_{md+[\log d]} \le p_{d-[\log d] +1}  \dots p_{d+1} .$$ The left hand side is, at least, $$  p_{md} \dots   p_{md+[\log d]} \ge \left(  C_1 md \log d  \right)^{[\log d]} ,$$ and the right hand side is, at most, $$ p_{d-[\log d] +1} \dots p_{d+1} \le \left(  C_2 d \log d  \right)^{[\log d]+1}  . $$ However, the condition $m\ge \frac{e^2 C_2}{C_1}$ implies $$ p_{md} \dots p_{md+[\log d]} > p_{d-[\log d]+1} \dots p_{d+1} .$$ Hence, the claim follows.

By Stirling's formula, we have 
\begin{align*}
\pi_d (p_1\dots p_{d+1}) \ll &\binom{md}{d} \frac{d^{3[\log d]}}{[\log d]!} \\
\ll & \frac{m^d d^d}{d!} d^{3\log d} \\
\ll & \frac{e^d m^d d^d}{d^d}d^{3\log d}\\
\ll &C^d
\end{align*}
for some constant $C>0$. Hence, the result follows.
\end{proof}
\end{lemma}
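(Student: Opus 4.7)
The plan is to count the ordered tuples $(q_1 < q_2 < \cdots < q_d)$ of primes whose product is at most $p_1 p_2 \cdots p_{d+1}$. A naive count --- choose any $d$ primes from the initial segment bounded by the largest possible $q_d$ --- is far too coarse, so the strategy is to split each admissible tuple into a short collection of ``large'' primes and a bulk of ``small'' primes, and count the two pieces separately.

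First I would bound the top prime $q_d$ that can appear. Since $q_1 q_2 \cdots q_{d-1}$ is a product of $d-1$ distinct primes, it is at least $p_1 p_2 \cdots p_{d-1}$, so the hypothesis $\prod_i q_i \le \prod_{j=1}^{d+1} p_j$ immediately gives $q_d \le p_d p_{d+1}$. The bounds $C_1 n \log n < p_n < C_2 n \log n$ (Theorem 4.7 of \cite{Apo}) turn this into a polynomial bound in $d$, so $q_d \le p_N$ for some $N$ of order $d^3$. Consequently every $q_i$ lies in $\{p_1,\ldots,p_{d^3}\}$.

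The key refinement is to show that for a sufficiently large constant $m$ (specifically $m \ge e^2 C_2 / C_1$), at most $[\log d]$ of the $q_i$ can exceed $p_{md}$. Suppose on the contrary that $[\log d]+1$ of them did. Replacing the ``small'' ones by $p_1,p_2,\ldots,p_{d-[\log d]}$ and the ``large'' ones by $p_{md},p_{md+1},\ldots,p_{md+[\log d]}$ only decreases the product, so after canceling the common initial factors from both sides one arrives at
$$p_{md}\cdots p_{md+[\log d]} \le p_{d-[\log d]+1}\cdots p_{d+1}.$$
By the PNT bounds the left side is at least $(C_1 m d\log d)^{[\log d]}$ while the right side is at most $(C_2 d\log d)^{[\log d]+1}$; the factor $(C_1 m/C_2)^{[\log d]}$ then overwhelms the single extra $C_2 d\log d$ once $m$ is chosen as above, giving a contradiction.

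Finally, I would choose the (at most) $[\log d]$ large primes from $\{p_1,\ldots,p_{d^3}\}$, in at most $\binom{d^3}{[\log d]} \le d^{3[\log d]}$ ways, and the remaining small primes from $\{p_1,\ldots,p_{md}\}$, in at most $\binom{md}{d} \le (em)^d$ ways by Stirling. Multiplying,
$$\pi_d(p_1\cdots p_{d+1}) \le (em)^d \cdot d^{3[\log d]} = (em)^d \cdot e^{3(\log d)^2},$$
which is bounded by $C^d$ for any $C > em$ and all sufficiently large $d$ since $(\log d)^2 = o(d)$. The main obstacle is the density claim in paragraph three: one has to align the PNT upper and lower bounds for consecutive products of primes carefully so that the factor $(C_1 m/C_2)^{[\log d]}$ genuinely defeats the extra $p_{d+1}$ on the right, and the precise choice of $m$ is what makes this work.
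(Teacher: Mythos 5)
Your argument is correct and follows essentially the same route as the paper: the same bound $q_d \le p_d p_{d+1}$ placing all primes in $\{p_1,\dots,p_{d^3}\}$, the same key claim that at most about $[\log d]$ of the $q_i$ exceed $p_{md}$ with $m \ge e^2 C_2/C_1$ (proved by the same comparison $(C_1 m d\log d)^{[\log d]}$ versus $(C_2 d\log d)^{[\log d]+1}$), and the same final count $\binom{md}{d}\cdot d^{3[\log d]} \ll C^d$ via Stirling.
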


\begin{prop}\label{N2}
We have $$ \# \overline{N_{d_n}} (P_n)  =O\left(  (d_n+1)!  C^{d_n} \right) $$ for some constant $C$.
\begin{proof}
Since $n < p_1 p_2 \dots p_{d_n +2}$, we have 
\begin{align*}
\# \overline{N_{d_n}} (P_n) \le& \# \overline{N_{d_n}} (P_{p_1p_2\dots p_{d_n +2}})  \\
=&(d_n+1)! \pi_{d_n+1} (p_1p_2 \dots p_{d_n+2}),
\end{align*}
and Proposition \ref{N1} completes this proof.
\end{proof}

\end{prop}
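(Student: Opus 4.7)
The plan is to reduce $\# \overline{N_{d_n}}(P_n)$ to the cleaner count of top-weight squarefree integers in a uniformly-structured poset, so that Lemma \ref{N1} immediately applies. The definition of $d_n$ gives $n < N$ for $N := p_1 p_2 \cdots p_{d_n+2}$, which provides a natural enlargement of $P_n$ whose combinatorial structure is just the Boolean lattice on $d_n+2$ generators (with the empty subset removed).

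First, I would use monotonicity. Since every squarefree integer in $[2,n]$ is also a squarefree integer in $[2,N]$ and the divisibility order agrees on the overlap, $P_n$ is a sub-poset of $P_N$. Thus any chain of non-identity morphisms of length $d_n$ in $P_n$ remains such a chain in $P_N$, so
$$\#\overline{N_{d_n}}(P_n) \le \#\overline{N_{d_n}}(P_N).$$
This trades the arithmetically awkward cutoff at $n$ for the uniform combinatorial structure of divisors of $N$.

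Second, I would count length-$d_n$ chains in $P_N$ by stratifying them by their top element $T$. Writing $T = q_1 q_2 \cdots q_w$ as a product of distinct primes (weight $w$), a chain of length $d_n$ with top $T$ is a strictly ascending flag of $d_n + 1$ non-empty subsets of $\{q_1,\ldots,q_w\}$ ending at the full set. Because weights strictly increase from $\ge 1$ up to $w \le d_n + 2$, we must have $w \in \{d_n+1, d_n+2\}$. When $w = d_n+1$ the weights are forced to be $1,2,\ldots,d_n+1$ and the number of chains ending at $T$ is exactly $(d_n+1)!$; when $w = d_n+2$ the only top is $T = N$ itself, and the number of chains is $(d_n+1)!$ times a polynomial in $d_n$ (coming from which intermediate weight is skipped). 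Summing gives
$$\#\overline{N_{d_n}}(P_N) \le (d_n+1)!\bigl(\pi_{d_n+1}(N) + \mathrm{poly}(d_n)\bigr).$$

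Finally, Lemma \ref{N1} directly gives $\pi_{d_n+1}(N) = \pi_{d_n+1}(p_1 \cdots p_{d_n+2}) \le C^{d_n+1}$, and the polynomial correction is absorbed by a modest enlargement of $C$, yielding the desired $O\bigl((d_n+1)! \, C^{d_n}\bigr)$ bound. I do not expect a serious obstacle: the arithmetic content lives entirely in Lemma \ref{N1}, while steps one and two are routine bookkeeping about chains in subset lattices. The only place that requires a moment's care is the single top of weight $d_n+2$, but its contribution is polynomial in $d_n$ and hence negligible next to $(d_n+1)!\,C^{d_n}$.
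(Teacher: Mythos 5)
Your proof is correct and follows essentially the same route as the paper: compare $P_n$ with $P_N$ for $N=p_1\cdots p_{d_n+2}$, count the length-$d_n$ chains of $P_N$ by their top element, and invoke Lemma \ref{N1}. In fact your handling of the single top of weight $d_n+2$ is slightly more careful than the paper's, which asserts the exact equality $\#\overline{N_{d_n}}(P_N)=(d_n+1)!\,\pi_{d_n+1}(N)$ and thereby omits the chains ending at $N$ itself (there are $f_{d_n,d_n+1}=\tfrac{(d_n+1)(d_n+2)}{2}(d_n+1)!$ of them); as you observe, this correction is only polynomial in $d_n$ and is absorbed into the constant, so the stated $O\left((d_n+1)!\,C^{d_n}\right)$ bound is unaffected.
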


The last problem is to estimate $| \beta_{1,n}^{(k_n)} |$.

\begin{defi}
By Proposition \ref{ES}, we can put $$ | \beta_{1,n}^{(k_n)} |\sim \alpha_n(d_n+1)!^{k_n} $$ as $k_n \to \infty$ if $\chi(P_n)\not = 0$, since $\chi(P_n), H_{1,d_n}$, and $\# \overline{N_{d_n} }(P_n)$ are constant for any fixed $n\ge 6$. Namely, define $$  \alpha_n =\frac{  H_{1,d_n}  \# \overline{N_{d_n}} (P_n) }{    \chi(P_n )}$$
\end{defi}

\begin{exam}
We compute $\alpha_n$ when $n$ is small. 

\begin{center}

\begin{tabular}{ccccccccccccccc} \toprule
$n$&6&7&10&11&13&14&15&17&19&21&22&23&26&29 \\ \midrule
$\alpha_n$&1& $\frac{2}{3}$&2&$\frac{4}{5}$&1&2&4&$\frac{8}{3}$&2&$\frac{10}{3}$&6&4&7&$\frac{14}{3}$  \\ \bottomrule 
\end{tabular}

\begin{tabular}{c|cccccccccccccc} \toprule
$n$&30&31&33&34&35&37&38&39&41&42&43&46&47&51     \\ \midrule
$\alpha_n$&$\frac{3}{4}$&$\frac{3}{5}$  &$\frac{3}{4}$ &1&$\frac{3}{2}$  &1&$\frac{3}{2}$&3&$\frac{3}{2}$ &2&$\frac{3}{2}$ &2&$\frac{3}{2}$ &2   \\ \bottomrule 
\end{tabular}

\begin{tabular}{ccccccccccccccc} \toprule
$n$&53&55&57&58&59&61&62&65&66&67&69&70&71&73 \\ \midrule
$\alpha_n$&$\frac{3}{2}$ &2&3&6&3&2&3&6&$\frac{9}{2}$ &3&$\frac{9}{2}$ &4&3&$\frac{12}{5}$  \\ \bottomrule 
\end{tabular}

\begin{tabular}{cccccccccc|ccc} \toprule
$n$&  74&$\cdots$&199&201&202&203&205&206&209&210&211&213  \\ \midrule
$\alpha_n$&3&$\cdots$&    $\frac{19}{3}$ &$\frac{57}{8}$ &$\frac{57}{7}$ &$\frac{19}{2}$&$\frac{57}{5}$ &$\frac{57}{4}$&19&$\frac{24}{11}$ &$\frac{16}{11}$ &$\frac{24}{11}$    \\ \bottomrule 
\end{tabular}

\end{center}

When the dimension increases, $\alpha_n$ suddenly decreases since $H_{1,d_n}$ and $\# \overline{N_{d_n} } (P_n)$ do so.

\end{exam}

The hardest part is to estimate $\alpha_n$. Due to the part, we do not complete this application.

We introduce the following conjecture and its consequence:

\begin{conj}
We have $$ \alpha_n \gg \frac{1}{(d_n+1)!} $$ or $$  \alpha_n  \gg_{\varepsilon} \frac{1}{(d_n+1)!^{\frac{1}{2} +\varepsilon}} $$
for any $\varepsilon >0$.
\end{conj}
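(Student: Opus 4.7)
The plan is to bound each factor of
$$\alpha_n = \frac{H_{1,d_n}\,\#\overline{N_{d_n}}(P_n)}{\chi(P_n)}$$
separately. Proposition \ref{N3} already gives $H_{1,d_n} \ge \sqrt{2}^{d_n}/((d_n+1)!\,d_n)$, which by Proposition \ref{dim} is of order $(d_n+1)!^{-1}\,n^{o(1)}$. So the $H_{1,d_n}$ factor by itself already supplies essentially the $(d_n+1)!^{-1}$ that the conjecture asks for, and the problem reduces to comparing the maximal-chain count with the Euler characteristic: it suffices to prove $|\chi(P_n)| \ll \#\overline{N_{d_n}}(P_n)\cdot n^{o(1)}$ for the strong form, or $|\chi(P_n)| \ll \#\overline{N_{d_n}}(P_n)\cdot(d_n+1)!^{1/2+\varepsilon}$ for the weaker one.

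For the chain count I would reuse the identity $\#\overline{N_{d_n}}(P_n) = (d_n+1)!\,\pi_{d_n+1}(n)$ extracted in the proof of Proposition \ref{N2}, so the task becomes lower-bounding the Landau function $\pi_{d_n+1}(n)$. For fixed $k$ this is classical, $\pi_k(x) \sim x(\log\log x)^{k-1}/((k-1)!\log x)$, but here $k = d_n+1$ grows with $n$ like $\log n/\log\log n$, so a uniform version of Sathe--Selberg type is needed. Combined with Stirling, such an estimate should give $\#\overline{N_{d_n}}(P_n) \gg n^{1-o(1)}$, so that the numerator $H_{1,d_n}\#\overline{N_{d_n}}(P_n)$ is of size at least $n^{1-o(1)}/(d_n+1)!$. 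This step is technical but essentially analytic-number-theoretic bookkeeping and should be tractable.

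Everything then collapses onto an upper bound on $|\chi(P_n)|$, and this is where I expect the main obstacle. Since $\chi(P_n) = 1 - M(n)$ for the Mertens function $M$, translating the required inequality back through Lemma \ref{N5} and Proposition \ref{ES} shows that the weaker half of the conjecture is essentially equivalent to $M(n) \ll n^{1/2+\varepsilon}$, i.e.\ to the Riemann hypothesis, while even the strong half demands Mertens cancellation of prime-number-theorem strength. No combinatorial manipulation of the $H$-polynomials and barycentric subdivisions used above can by itself supply such cancellation; some analytic input (non-vanishing of $\zeta(s)$ on $\Re s = 1$, or the explicit formula for $M$) must enter. A natural direction is to refine the multiplicative structure on $P_n$ so that the sign cancellations in $\sum_{A\in\Delta_n}(-1)^{\#A-1}$ become manifest combinatorially, but I expect any serious completion of the conjecture to proceed by invoking known Mertens-type estimates rather than by producing them afresh, so the conjecture is best regarded as a target that organises what has to be done, not as a statement with a self-contained proof.
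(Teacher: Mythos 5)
There is no proof in the paper for you to match: this statement is stated as a \emph{conjecture}, and the author says explicitly that estimating $\alpha_n$ is ``the hardest part'' and that the application is left incomplete precisely because of it. Your proposal, as you yourself admit in the final sentence, is also not a proof; it is an assessment of why no proof is available, and that assessment is essentially the paper's own. Indeed, the paper proves that the weak form of the conjecture implies $\chi(P_n)=O(n^{1/2+\varepsilon})$ and hence the Riemann Hypothesis, and (Proposition \ref{con1}) that the strong form implies a Mertens-type bound sharper than the best known estimate (Theorem 12.7 of Ivi\'c). So your central point --- that after feeding in Proposition \ref{N3}, Proposition \ref{dim} and the chain count, everything collapses onto an upper bound for $|\chi(P_n)|=|1-M(n)|$ of a strength at or beyond current technology, which the combinatorics of $H$-polynomials and barycentric subdivision cannot supply --- is correct and consistent with how the paper positions the conjecture. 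Judged as a proof attempt, however, the gap is total: the one step that matters (the cancellation in $M(n)$) is exactly the open problem, and neither you nor the paper closes it.

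One quantitative remark on your intermediate bookkeeping. You do not need (and cannot straightforwardly use) a uniform Sathe--Selberg estimate here: for $k=d_n+1\approx \log n/\log\log n$ the classical asymptotic for $\pi_k(x)$ is far outside its range, and the paper's Lemma \ref{N1} shows that near the primorial scale $\pi_{d}(p_1\cdots p_{d+1})\le C^{d}=n^{o(1)}$. The trivial lower bound $\#\overline{N_{d_n}}(P_n)\ge (d_n+1)!$ (one maximal chain from the primorial $p_1\cdots p_{d_n+1}\le n$) already gives the $n^{1-o(1)}$ order you want in the weak sense, matching the upper bound of Proposition \ref{N2} up to $\exp\bigl(O(\log n\,\log\log\log n/\log\log n)\bigr)$ factors. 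But those are exactly the factors in which the whole content of the strong form lives, so sharpening this step cannot help; the paper itself notes after Proposition \ref{con1} that improving Proposition \ref{N3} or Lemma \ref{N1} is pointless, and that the real difficulty is $\alpha_n$, i.e.\ $\chi(P_n)$ itself.
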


I think that we should estimate $\alpha_n$ by some function of $d_n$ rather than that of $n$ since we do $H_{a,d_n}$ and $\# \overline{N_{d_n}} (P_n)$ so.

\begin{prop}\label{con1}
If $\alpha_n \gg \frac{1}{(d_n+1)!}$, then we have $$   \chi(P_n) =O\left(  n \exp \left(  -A\frac{\log n \log \log \log n}{\log \log n} \right)  \right)  $$ for some constant $A>0$.
\begin{proof}
Proposition \ref{ES}, \ref{N3}, and Corollary \ref{N2} imply
\begin{align*}
\chi(P_n)  \ll& \frac{(d_n+1)!^{k_n}}{\alpha_n (d_n+1)!^{k_n}} \frac{2^{d_n}}{(d_n+1)!}(d_n+1)!C^{d_n} \\
\ll& \frac{C^{d_n}}{\alpha_n} \\
\ll& (d_n+1)!C^{d_n} \\
\ll& d_n^{d_n}C^{d_n} \\
=&\exp\left(  d_n \log d_n  +d_n \log C  \right).
\end{align*}
By Proposition \ref{dim}, we have
\begin{align*}
\chi(P_n)  \ll &  \exp \left(  \frac{\log n}{\log \log n} \log \left(  \frac{\log n}{\log \log n} \right) +A\frac{\log n}{\log \log n}   \right) \\
=&\exp \left(  \log n -\frac{\log n}{\log \log n} \log \log \log n +A\frac{\log n}{\log \log n}   \right)\\
\ll& n \exp \left(  -A \frac{\log n \log \log \log n}{\log \log n}   \right)
\end{align*}
for some constant $A>0$. Hence, the result follows.
\end{proof}
\end{prop}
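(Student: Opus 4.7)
The plan is to chain together the three key estimates already established---Lemma~\ref{N5} (or equivalently Proposition~\ref{ES}), the hypothesis on $\alpha_n$, Proposition~\ref{N3} on $H_{1,d_n}$, and Proposition~\ref{N2} on $\#\overline{N_{d_n}}(P_n)$---and then translate the resulting bound, which is expressed in terms of the dimension $d_n$, into one expressed in terms of $n$ via Proposition~\ref{dim}.

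First, I would use the definition of $\alpha_n$ together with Proposition~\ref{ES}, which says that $|\beta^{(k_n)}_{1,n}| \sim \alpha_n (d_n+1)!^{k_n}$, so that the factor $(d_n+1)!^{k_n}$ appearing in Lemma~\ref{N5} cancels. This reduces the asymptotic for $|\chi(P_n)|$ to a quantity depending only on $n$ (not on the subdivision parameter $k_n$), namely the product $H_{1,d_n}\,\#\overline{N_{d_n}}(P_n)/\alpha_n$. I would then apply the three inequalities in turn: the hypothesis $\alpha_n \gg 1/(d_n+1)!$ replaces $1/\alpha_n$ by $(d_n+1)!$; Proposition~\ref{N3} replaces $H_{1,d_n}$ by $2^{d_n+1}/(d_n+1)!$; and Proposition~\ref{N2} replaces $\#\overline{N_{d_n}}(P_n)$ by $(d_n+1)!\,C^{d_n}$. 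After cancellation of one $(d_n+1)!$ and collection of geometric factors, I expect a clean bound of the form
\[
|\chi(P_n)| \ll (d_n+1)!\,C_1^{d_n} \ll d_n^{d_n} C_2^{d_n} = \exp\bigl(d_n \log d_n + d_n \log C_2\bigr),
\]
using Stirling in the penultimate step.

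Next I would substitute the asymptotic $d_n = \frac{\log n}{\log\log n} + O\!\left(\frac{\log n}{(\log\log n)^2}\right)$ from Proposition~\ref{dim}. The key algebraic observation is $\log d_n = \log\log n - \log\log\log n + O(1)$, so that
\[
d_n \log d_n \;=\; \frac{\log n}{\log\log n}\bigl(\log\log n - \log\log\log n\bigr) + O\!\left(\tfrac{\log n}{\log\log n}\right) \;=\; \log n - \frac{\log n \cdot \log\log\log n}{\log\log n} + O\!\left(\tfrac{\log n}{\log\log n}\right).
\]
The linear term $d_n \log C_2$ contributes only $O(\log n / \log\log n)$, which is dominated (for large $n$) by the negative main error term. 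Exponentiating produces the claimed bound $n \exp\!\bigl(-A\,\log n \log\log\log n / \log\log n\bigr)$ for some $A>0$.

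The main obstacle is really bookkeeping rather than a deep new idea: one has to track the lower-order term $O(\log n/(\log\log n)^2)$ in $d_n$ through the expansion of $d_n \log d_n$, and verify that its contribution is absorbed into the constant $A$ of the main exponential term (i.e., that the negative contribution $-\log n \log\log\log n/\log\log n$ genuinely dominates all the positive lower-order contributions for sufficiently large $n$). Once that is checked, the result follows directly from the chain of inequalities above.
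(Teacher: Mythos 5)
Your proposal is correct and follows essentially the same route as the paper's proof: cancel the $(d_n+1)!^{k_n}$ factors via Proposition~\ref{ES}, apply the hypothesis on $\alpha_n$ together with Propositions~\ref{N3} and~\ref{N2} to reach $\chi(P_n)\ll\exp(d_n\log d_n+O(d_n))$, then substitute Proposition~\ref{dim}. The bookkeeping you flag at the end --- checking that the positive $O(\log n/\log\log n)$ contributions are absorbed by the negative $\log n\log\log\log n/\log\log n$ term --- is exactly the step the paper performs in its final display.
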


The result improves the best result of $M(x)$: $$  M(x)=O \left( x\exp \left( -B \log^{\frac{3}{5}} x (\log \log x)^{-\frac{1}{5}} \right)  \right) . $$
See, for example, Theorem 12.7 of \cite{Ivic}.

Similarly, we obtain the following:

\begin{prop}
If $  \alpha_n  \gg_{\varepsilon} \frac{1}{(d_n+1)!^{\frac{1}{2} +\varepsilon}} $ for any $\varepsilon>0$, the Riemann Hypothesis is true.
\end{prop}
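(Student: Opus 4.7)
The plan is to mirror the argument of Proposition~\ref{con1} verbatim, substituting the weaker lower bound $\alpha_n \gg_\varepsilon (d_n+1)!^{-1/2-\varepsilon}$ for $\alpha_n \gg (d_n+1)!^{-1}$. The target is the estimate $\chi(P_n)=O(n^{1/2+\varepsilon})$ for every $\varepsilon>0$, which by the equivalence recorded in \S 3.1 is precisely the Riemann Hypothesis.

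First I would rewrite the definition of $\alpha_n$ as
\[
|\chi(P_n)| \;=\; \frac{H_{1,d_n}\,\#\overline{N_{d_n}}(P_n)}{\alpha_n},
\]
and then plug in the three bounds already available: $H_{1,d_n}\le 2^{d_n+1}/(d_n+1)!$ from Proposition~\ref{N3}, $\#\overline{N_{d_n}}(P_n)\ll (d_n+1)!\,C^{d_n}$ from Proposition~\ref{N2}, and the hypothesis $\alpha_n^{-1}\ll_\varepsilon (d_n+1)!^{1/2+\varepsilon}$. Two factors of $(d_n+1)!$ cancel and the third is raised to the exponent $1/2+\varepsilon$, leaving
\[
|\chi(P_n)| \;\ll_\varepsilon\; D^{d_n}\,(d_n+1)!^{1/2+\varepsilon}
\]
for some absolute $D>0$ (say $D=2C$).

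The remaining step is to convert the right-hand side into a power of $n$. Since $p_k\ge k$ for every $k\ge 1$, the defining inequality $p_1p_2\cdots p_{d_n+1}\le n$ gives $(d_n+1)!\le n$, so $(d_n+1)!^{1/2+\varepsilon}\le n^{1/2+\varepsilon}$. On the other hand, Proposition~\ref{dim} yields $d_n=O(\log n/\log\log n)$, whence
\[
D^{d_n} \;=\; \exp\!\bigl(O(\log n/\log\log n)\bigr) \;=\; n^{o(1)}.
\]
Combining gives $|\chi(P_n)| \ll_\varepsilon n^{1/2+\varepsilon+o(1)}$, and since $\varepsilon>0$ is arbitrary this is the same as $\chi(P_n)=O(n^{1/2+\varepsilon'})$ for every $\varepsilon'>0$.

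The argument is essentially a bookkeeping assembly of the estimates already established, so I do not anticipate a genuine obstacle. The only mild point of care is absorbing the factor $D^{d_n}=n^{o(1)}$ into the arbitrary $n^{\varepsilon}$; this is legitimate because $\log D/\log\log n\to 0$, so for every fixed $\varepsilon'>\varepsilon$ we have $D^{d_n}=o(n^{\varepsilon'-\varepsilon})$. Notice that the weaker exponent $1/2+\varepsilon$ on $(d_n+1)!$ is precisely what is needed to avoid the earlier blow-up $(d_n+1)!\cdot C^{d_n}$ (growing like $d_n^{d_n}$, hence like $n/\mathrm{polylog}(n)$) that forced Proposition~\ref{con1} to land at the \emph{prime number theorem} bound rather than at square-root cancellation.
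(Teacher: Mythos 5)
Your proof is correct and follows essentially the same route the paper intends (the paper only says ``similarly'' to Proposition~\ref{con1}): combine the exact identity $\chi(P_n)=H_{1,d_n}\#\overline{N_{d_n}}(P_n)/\alpha_n$ with Propositions~\ref{N3} and~\ref{N2} and the hypothesis, then convert to a power of $n$. Your shortcut $(d_n+1)!\le p_1\cdots p_{d_n+1}\le n$, together with $D^{d_n}=n^{o(1)}$ from Proposition~\ref{dim}, is a clean and valid way to reach $\chi(P_n)=O(n^{1/2+\varepsilon})$, which is the Riemann Hypothesis by the equivalence in \S 3.1.
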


Note that Proposition \ref{dim} is precise and it is meaning less to improve Proposition \ref{N3} and Lemma \ref{N1}. If we supposed $$ H_{1,d} =\frac{  \sqrt{2}  }{  (d+1)! d  } $$ and $$\pi_d(p_1p_2\dots p_{d+1})=1 ,$$ we would have $\chi(P_n) \ll \frac{     \sqrt{2}^{d_n}  }{ \alpha_n  }$, and this is almost $\frac{    C^{d_n} }{ \alpha_n   }$.

At the beginning, our approach is topological, however we only use the two elementary topological notions: Euler characteristic and barycentric subdivision. The proof of Theorem \ref{main} is almost elementary, except for Rouche's Theorem, and Proposition \ref{N3}, \ref{dim}, \ref{N2} are also. We use Theorem 4.7 of \cite{Apo} in this section, however note that the proof is also elementary.

The result of Proposition \ref{con1} is very strong; therefore, I guess that to estimate $\alpha_n$ requires us higher techniques in topology.

\end{document}